\numberwithin{equation}{section}
\newcommand{\set}[1]{\left\{#1\right\}}
\newcommand{\card}[1]{\left|#1\right|}
\newcommand{\floor}[1]{\left\lfloor#1\right\rfloor}
\newcommand{\mP}{\mathcal P}
\newcommand{\sfL}{\mathsf L}
\newcommand{\sfR}{\mathsf R}
\newcommand{\ptn}{\mathsf {ptn}}
\newcommand{\len}{\mathsf {len}}
\renewcommand{\vec}[1]{\overrightarrow{#1}}
\newcommand{\Z}{\ensuremath{\mathbb{Z}}}
\newtheorem{theorem}{Theorem}[section]
\newtheorem{teo}{Theorem}[section]
\newtheorem{prop}[teo]{Proposition}
\newtheorem{lemma}[teo]{Lemma}
\newtheorem{df}[teo]{Definition}
\newtheorem{example}[teo]{Example}
\begin{document}

\title[Expanders, Generalized Hyperbolic Spaces and Product of Trees]{Traffic Congestion in Expanders, $(p,\delta)$--Hyperbolic Spaces and Product of Trees}

\author[Shi Li and Gabriel H. Tucci]
{Shi Li and Gabriel H. Tucci}

\address{Bell Laboratories Alcatel--Lucent,
Murray Hill, NJ 07974, USA}

\email{gabriel.tucci@alcatel-lucent.com}
\email{shili@princeton.edu}

\begin{abstract}
In this paper we define the notion of $(p,\delta)$--Gromov hyperbolic space where we relax Gromov's {\it slimness} condition to allow that not all but a positive fraction of all triangles are $\delta$--slim. Furthermore, we study maximum vertex congestion under geodesic routing and show that it scales as $\Omega(p^2n^2/D_n^2)$ where $D_n$ is the diameter of the graph. We also construct a constant degree family of expanders with congestion $\Theta(n^2)$ in contrast with random regular graphs that have congestion $O(n\log^{3}(n))$. Finally, we study traffic congestion on graphs defined as product of trees.
\end{abstract}

\maketitle

\section{Introduction} 

The purpose of this work is to continue the study of traffic congestion under geodesic routing. By geodesic routing we mean that the path chosen to route the traffic between the nodes is the minimum length path. If there are several paths with the shortest distance (i.e. several geodesics joining the nodes) then we divide the traffic equally among these. We are restricting ourselves to connected graphs since traffic in non--connected graphs needs to be analyzed in a component by component basis. Our set up through out the paper is the following. Let $\{G_{n}\}_{n=1}^{\infty}$ be a family of connected graphs where $G_{n}$ has $n$ nodes. For each pair of nodes in $G_{n}$, consider a unit flow of traffic that travels through the minimum path(s) between the nodes as we previously discussed. Hence, the total traffic flow in $G_{n}$ is equal to $n(n-1)/2$. Given a node $v\in G_{n}$ we define $\mathcal{L}_{n}(v)$ as the total traffic flow passing through the node $v$. Let $M_{n}$ be the maximum vertex flow across the graph
$$
M_{n}:=\max \Big\{\mathcal{L}_{n}(v)\,:\,v\in G_{n}\Big\}.
$$
It is easy to see that for any graph $n-1\leq M_{n}\leq n(n-1)/2$. Throughout the paper we use the following standard notation. For two positive functions $f$ and $g$ we say that $f=O(g)$ if there exists a constant $k$ such that $f(n)\leq kg(n)$. We denote $f=\Theta(g)$ if there exist constants $k_1$ and $k_2$ such that $k_1 g(n)\leq f(n)\leq k_2g(n)$ and finally $f=\Omega(g)$ is there exists $k$ such that $f(n)\geq kg(n)$.

It was observed experimentally in \cite{NS1}, and proved formally in \cite{BT1, BT2}, that if the family is Gromov hyperbolic then the maximum vertex congestion scales as $\Theta(n^2)$. More precisely, let $\{G_{n}\}_{n=1}^{\infty}$ be an increasing sequence of finite simple graphs that is uniformly Gromov $\delta$--hyperbolic, for some non--negative $\delta$, then there is a sequence of nodes $\{x_{n}\}_{n=1}^{\infty}$ with $x_{n}\in G_{n}$ such that the total traffic passing through $x_{n}$ is greater than $cn^2$ for some positive constant $c$ independent on $n$. These highly congested nodes are called the {\it core} of the graph.  
 
In this work we extend this analysis and study what happens to the traffic congestion if we relax the slimness condition so that not all but a fraction of all triangles are $\delta$--slim. More precisely, we say that a metric $(X,d)$ is $(p,\delta)$--hyperbolic if for at least a $p$ fraction of the 3--tuples $(u,v,w) \in X^3$ the geodesic triangle $\triangle_{uvw}$ is $\delta$--slim. The case $p$ equals to $1$ corresponds to the classic Gromov $\delta$--hyperbolic spaces. We show that congestion in these graphs scales as $\Omega(p^2n^2/D_n^2)$ where $D_n$ is the diameter of $G_{n}$.

It was shown in \cite{Buyalo} that hyperbolic groups embed quasi--isometrically in the product of finitely many regular trees. For this reason, we believe that it is interesting to understand the traffic characteristic in these graphs as well. Let $T_{d}^{k}$ be the infinite graph defined as the product $T_{d}\times\cdots\times T_{d}$ of $k$ infinite regular trees of degree $d$. Let us fix an arbitrary vertex $v^*$ in $T_{d}^{k}$ and let $G_{n}$ be the induced graph by the ball of centrer $v^*$ and radius $r$ in the underlying graph. Let $n=n(r)$ be the cardinality of $B(v^*,r)$. We show that the maximum vertex congestion in $G_{n}$ scales as $\Theta(n^2/\log_{d-1}^{k-1}(n))$ as $r$ increases.

Another important family of graphs are expander. In graph theory, an expander graph is a sparse graph that has strong connectivity properties. Expander constructions have spawned research in pure and applied mathematics, with several applications to complexity theory, design of robust computer networks, and the theory of error--correcting codes. It is well known that random regular graphs are a large family of expanders. It was proved in \cite{T} that random $d$--regular graphs have maximum vertex congestion scaling as $O(n\log_{d-1}^{3}(n))$. Therefore, it is a natural question to ask if expanders have always low congestion under geodesic routing. In Section \ref{expanders}, we show that this is not the case. More precisely, we construct a family of expanders $\{G_{n}\}_{n=1}^{\infty}$ with maximum vertex congestion $\Theta(n^2)$.

\section{Preliminaries}\label{prelim}
\subsection{Hyperbolic Metric Spaces}
\noindent In this Section we review the notion of Gromov $\delta$--hyperbolic space. There are many equivalent definitions of Gromov hyperbolicity but the one we take as our definition is the property that triangles are {\it slim}.

\begin{df}
Let $\delta>0$. A geodesic triangle in a metric space $X$ is said to be $\delta$--slim if each of its sides is contained in the $\delta$--neighbourhood of the union of the other two sides. A geodesic space $X$ is said to be $\delta$--hyperbolic if every triangle in $X$ is $\delta$--slim. 
\end{df}

\noindent It is easy to see that any tree is $0$-hyperbolic. Other examples of hyperbolic spaces include, the fundamental group of a surface of genus greater or equal than 2, the classical hyperbolic space, and any regular tessellation of the hyperbolic space (i.e. infinite planar graphs with uniform degree $q$ and $p$--gons as faces with $(p-2)(q-2)>4$).

\subsection{Definition of the Core}\label{core}

\noindent We review the definition of the core of a graph given in \cite{BT1}. Let $\{G_{n}\}_{n=1}^{\infty}$ be a sequence of finite and simple graphs such that $|G_{n}|=n$.

\noindent For each fixed $n$ consider a load measure $\mu_{n}$ supported on $G_{n}$. This measure defines a traffic flow between the elements in $G_{n}$ in the following way; given $u$ and $v$ nodes in $G_{n}$ the traffic between these two nodes splits equally between the geodesics joining $u$ and $v$, and is equal to $\mu_{n}(v)\mu_{n}(u)$. Note that the uniform measure determines a uniform traffic between the nodes. Given a subset $A\subset G_{n}$ we denote by $\mathcal{L}_{n}(A)$ the total traffic passing through the set $A$. The total traffic in $G_{n}$ is equal to $\mathcal{L}_{n}(G_{n})$.

Given a node $y\in G_{n}$ and $r>0$ we denote by $B(y,r)$ the ball of center $y$ and radius $r$.

\begin{df}
Let $\alpha$ be a number between $0$ and $1$. We say that a point $y\in G_n$ is in the asymptotic $\alpha$--core if 
\begin{equation}
ap(y,r_\alpha):=\liminf_{n\to\infty}{\frac{\mathcal{L}_{n}(B(y,r_{\alpha}))}{\mathcal{L}_{n}(G_{n})}}\geq \alpha
\end{equation}
for some $r_{\alpha}$ independent on $n$. The set of nodes in the asymptotic $\alpha$--core is denoted by $C_{\alpha}$. The core of the graph is the union of all the $\alpha$--cores for all the values of $\alpha$. This set is denoted by $\mathcal{C}$
\begin{equation}
\mathcal{C} = \cup_{\alpha>0}{\,C_\alpha}.
\end{equation}
\end{df}
\noindent We say that a graph has a core if $\mathcal{C}$ is non--empty.

Roughly speaking a point $y$ belongs to the $\alpha$--core if there exists a finite radius $r$, independent on $n$, such that the proportion of the total traffic passing through the ball of center $y$ and radius $r$ behaves asymptotically as $\alpha n^{2}/2$ as $n\to\infty$.

\vspace{0.2cm}
\begin{example}
Let $d\geq 2$ and let $T_d$ be the associated $d$--regular tree. Consider the increasing sequence of sets $\{T_{d}(n)\}_{n=1}^{\infty}$ where $T_{d}(n)$ is the truncated tree at depth $n$, and $\mu_{n}$ is uniform measure on $T_{d}(n)$. It was shown in \cite{BT1} that the asymptotic proportion of traffic through the root is $ap(\mathrm{root})=1-\frac{1}{d}$. Therefore, this graph has a non-empty core. Moreover, the core is the whole graph! However, for every $\alpha$ the $\alpha$--core is finite.
\end{example}

\vspace{0.2cm}
\begin{example}
Let $G=\Z^{p}$ and let $G_{n}$ be the graph induced by the set $\{-n/2,\ldots,n/2\}^{p}$ with the uniform measure. A simple but lengthy analytic calculation shows us that for every $x\in G$ the traffic flow through $v$ behaves as $\Theta(n^{1+\frac{1}{p}})$. Hence, if $p\geq 2$ the core of this graph is empty. This is not the case for $p=1$.
\end{example}

\subsection{Properties of the Core for $\delta$--Hyperbolic Graphs}\label{hyper}
  
Let  $\{G_{n}\}_{n=1}^{\infty}$ be a sequence of finite graphs as in the previous section. We are interested in understanding the asymptotic traffic flow through an element in the graph as $n$ grows. More precisely, for each $v$ element in $G_n$ and $r>0$ consider $B(v,r)$ the ball centred at $v$ and radius $r$. Let $\mathcal{L}_{n}(v,r)$ be traffic flow passing through this ball in the graph $G_{n}$. We want to study $\mathcal{L}_{n}(v,r)$ as $n$ goes to infinity. As we define in Section \ref{core} an element $v$ is in the asymptotic $\alpha$--core if 
$$
ap(B(v,r_\alpha))=\liminf_{n\to\infty}{\frac{\mathcal{L}_{n}(v,r_{\alpha})}{\mathcal{L}_{n}(G_{n})}}\geq \alpha
$$
for some radius $r_{\alpha}$ independent on $n$. In other words, asymptotically a fraction $\alpha$ of the total traffic passes within bounded distance of $v$.

\begin{teo}[Baryshnikov, T.] \label{main1}
Assume that there exists a non--negative constant $\delta$ such that $G_{n}$ is $\delta$--hyperbolic for every $n$. Then there exists a sequence $\{x_{n}\}_{n=1}^{\infty}$ with $x_{n}\in G_{n}$ and a positive constant $c$ independent on $n$ such that 
$$
\mathcal{L}_{n}(x_{n})\geq cn^2.
$$ 
\end{teo}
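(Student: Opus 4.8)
The plan is to show that a positive fraction of the $n(n-1)/2$ unit flows are forced through a single bounded--radius ball, and then to pull a heavily loaded vertex out of that ball.

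I would begin by choosing the basepoint $o=o_n\in G_n$ to be a \emph{median} of $G_n$, i.e.\ a vertex minimizing $\Phi(v):=\sum_{w\in G_n}d(v,w)$. This choice buys the following balance property, via the usual exchange argument: if a connected component of $G_n\setminus\{o\}$ had more than $n/2$ vertices, moving $o$ one step into that component would strictly decrease $\Phi$. Fixing a breadth--first spanning tree $\mathcal T$ rooted at $o$ (so that its root--to--vertex paths are geodesics of $G_n$), each of its branches $C_1,\dots,C_k$ — the subtrees hanging at the children of $o$ — therefore has $\le n/2$ vertices; since they partition $G_n\setminus\{o\}$, an elementary count gives at least $(1+o(1))n^2/4$ pairs $(u,v)$ lying in distinct branches, the \emph{inter--branch} pairs.

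Next I would use the slim--triangle hypothesis in the form: for any triple $(o,u,v)$ the three internal points of $\triangle_{ouv}$ are pairwise within $4\delta$, so there is an "incenter" $m(u,v)$ with $d(o,m(u,v))\in[(u|v)_o,\,(u|v)_o+4\delta]$ such that every geodesic from $u$ to $v$ meets $B(m(u,v),4\delta)$, where $(u|v)_o=\tfrac12(d(o,u)+d(o,v)-d(u,v))$. Hence the entire unit of traffic between $u$ and $v$ runs through $B(o,(u|v)_o+8\delta)$. It therefore suffices to prove that a positive fraction of the inter--branch pairs satisfy $(u|v)_o\le R$ for some constant $R=R(\delta)$: then $\mathcal L_n(B(o,R+8\delta))=\Omega(n^2)$, and averaging over the boundedly many vertices of that ball — here one uses that the graphs have uniformly bounded degree; this last passage genuinely fails without it, e.g.\ for $K_n$ — produces a vertex $x_n$ with $\mathcal L_n(x_n)\ge cn^2$.

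For the key claim I would argue by dichotomy. If some constant fraction of the vertices lie in a fixed ball $B(o,R_0)$, then $\Omega(n^2)$ pairs have both endpoints there, and since in a $\delta$--hyperbolic space a geodesic joining two points of $B(o,R_0)$ stays in $B(o,2R_0+\delta)$, that ball already carries $\Omega(n^2)$ traffic. Otherwise most vertices are far from $o$, and I would exploit that geodesics from $o$ diverge at an exponential rate once they are more than $O(\delta)$ apart — so $(u|v)_o\ge R$ forces the depth--$R$ ancestors of $u,v$ in $\mathcal T$ to be within $4\delta$ of each other — and count: grouping inter--branch pairs by these ancestors, weighting by the sizes of the $\mathcal T$--subtrees they span, bounding each such size by $n/2$ (balance) and using that the "$4\delta$--closeness" graph on the sphere $S(o,R)$ has bounded degree keeps their number below $\tfrac18 n^2$ once $R=R(\delta)$ is large enough, leaving $\Omega(n^2)$ inter--branch pairs with $(u|v)_o<R$. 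The step I expect to be the real obstacle is exactly this last estimate: ruling out a configuration in which the heavy branches of $\mathcal T$ fellow--travel within $O(\delta)$ for a long initial stretch before separating — which would move the true bottleneck deep into $\mathcal T$, far from $o$ — so that the quantitative form of exponential divergence has to be pushed hard enough to show that such a long fellow--travelling forces those branches to be a single branch, which the balance property then forbids.
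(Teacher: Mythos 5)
The paper itself contains no proof of this theorem: it is quoted from \cite{BT1, BT2}, so there is nothing internal to compare you against, and I can only judge the argument on its own terms. Two of its load-bearing steps are genuinely broken or missing. First, the balance property of the median fails for general graphs. The exchange argument (``moving $o$ one step into a component with more than $n/2$ vertices strictly decreases $\Phi$'') is the tree/centroid argument; in a graph, stepping from $o$ to a neighbour $o'$ inside a component $C$ of $G_n\setminus\{o\}$ decreases the distance only to those $w\in C$ whose geodesics from $o$ pass through $o'$, and increases it for other vertices of the \emph{same} component. The cycle $C_n$ is a counterexample: every vertex is a median, yet $G_n\setminus\{o\}$ is a single component with $n-1$ vertices. (Cycles are not uniformly hyperbolic, so a balance statement might still be extractable from $\delta$-slimness, but that would then be a lemma you must prove; it does not follow from minimizing $\Phi$.) Without it you have no lower bound on the number of inter-branch pairs, and the whole count collapses.

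Second --- and you flag this yourself --- the entire content of the theorem is the claim that $\Omega(n^2)$ pairs satisfy $(u|v)_o\le R(\delta)$, and the sketch does not close it. ``Exponential divergence of geodesics'' is not a consequence of $\delta$-slimness alone in a finite graph (it needs some visibility or branching hypothesis), and the counting you outline leans on a uniform degree bound that is not among the hypotheses of the statement. A bound of that kind is in fact unavoidable: $K_n$ is $0$-hyperbolic under the paper's definition (every side of every geodesic triangle is an edge whose two points lie on the other two sides), yet every vertex carries load only $\Theta(n)$, so the theorem as literally stated requires an implicit bounded-degree or bounded-ball-volume assumption, and any proof must invoke it exactly where you do, when passing from a congested ball to a congested vertex. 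So the proposal identifies the right objects --- a balanced basepoint, the Gromov product, slimness forcing every geodesic $[uv]$ through $B(o,(u|v)_o+O(\delta))$ --- but both quantitative estimates that would make the argument work are left unproved; as it stands this is a plan, not a proof. Note finally that the paper's own quantitative relative, Theorem 3.2, gives only $n^2/(D^2M^3)$ at $p=1$ and therefore does not recover this statement either.
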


\subsection{Expanders} We say that a family of graphs $\{G_{n}\}_{n=1}^{\infty}$ is a $c$--expander family if the edge expansion (also isoperimetric number or Cheeger constant) $h(G_n)\geq c$ where  
$$
h(G_n) = \min \Bigg\{\frac{|\partial S|}{|S|} \,\,:\,\,S\subset G_{n} \,\,\text{with}\,\, 1\leq |S|\leq |G_{n}|/2 \Bigg\}
$$
and $\partial S$ is the edge boundary of $S$, i.e., the set of edges with exactly one endpoint in $S$.

\section{Congestion on $(p,\delta)$--Hyperbolic Graphs}

In this Section we generalize the definition of Gromov hyperbolic spaces to include spaces where not all but a fixed proportion of the triangles in the metric space are {\it slim}. Furthermore, we study their traffic characteristics under geodesic routing. 
 
\begin{df}[$(p, \delta)$--Hyperbolic]
We say that a metric $(X,d)$ is $(p,\delta)$--hyperbolic if for at least a proportion $p$ of all geodesic triangles in $X$ are $\delta$--slim.
\end{df}

The classical Gromov $\delta$--hyperbolic spaces correspond to $p$ equals one.

\begin{theorem}
Let $(X,d)$ be a $(p,\delta)$--hyperbolic metric space of size $n$.  Let $D$ be its diameter and $M = \max\set{\card{B(u,\delta)} : u \in X}$ be the maximum number of points in a ball of radius $\delta$.  Then there exists a point $a \in X$ with congestion at least $p^2n^2/(D^2M^3)$. 
\end{theorem}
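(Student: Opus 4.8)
\emph{Overall strategy.} The plan is to locate an approximate ``funnel point'' $q$ of the $(p,\delta)$-hyperbolic structure and to show that, for $\Omega(p^2n^2/D^2)$ pairs of points, a constant (indeed $1/M^{O(1)}$) fraction of the joining geodesics is forced $O(\delta)$-close to $q$; pigeonholing inside the small ball $B(q,O(\delta))$ then pins the traffic onto a single point. Throughout I use the uniform measure, so the total traffic is $\binom n2$ and the congestion at a point is the sum over pairs $\{u,v\}$ of the fraction of $u$--$v$ geodesics through that point.

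\emph{Centres and pigeonholes.} If $\triangle_{uvw}$ is $\delta$-slim then, since the side $[u,v]$ is covered by the $\delta$-neighbourhoods of the other two sides, an intermediate-value argument along $[u,v]$ produces a point $z(u,v,w)\in[u,v]$ lying within $\delta$ of each of the three sides; taking $w$ as basepoint, $d(w,z(u,v,w))$ agrees with the Gromov product $(u\mid v)_w$ up to an additive $O(\delta)$. By hypothesis at least $pn^3$ ordered triples span a $\delta$-slim triangle, so some vertex $o$ occurs as the third coordinate in at least $pn^2$ of them; fix such an $o$ and let $\mathcal P$ be the set of $\ge pn^2$ pairs $(u,v)$ with $\triangle_{uvo}$ $\delta$-slim. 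The centre $z(u,v,o)$ lies on $[u,v]$ at integer distance at most $D$ from $o$, so binning by this distance yields a level $t$ and a set $\mathcal S\subseteq\mathcal P$, $|\mathcal S|\ge pn^2/(2D)$, all of whose centres sit at distance $t$ from $o$. Writing $\pi_t(x)$ for the point at distance $t$ from $o$ on a chosen geodesic $[o,x]$, for $(u,v)\in\mathcal S$ both $\pi_t(u)$ and $\pi_t(v)$ lie within $O(\delta)$ of $z(u,v,o)$, hence within $O(\delta)$ of each other.

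\emph{The bottleneck and the separator.} Partition the vertices occurring in $\mathcal S$ by the value $\pi_t(\cdot)$; the parts have total size $\le n$ and every pair of $\mathcal S$ joins parts with $O(\delta)$-close labels, so a double count gives a point $q$ with at least $2|\mathcal S|/n\ge pn/D$ vertices $x$ satisfying $\pi_t(x)\in B(q,O(\delta))$. Call this vertex set $S^\ast$. For $x,y\in S^\ast$ the geodesics $[o,x]$ and $[o,y]$ are $O(\delta)$-close at level $t$, so $(x\mid y)_o\ge t-O(\delta)$; and if in addition $x$ and $y$ have already parted a bounded number of steps past level $t$, then $(x\mid y)_o=t+O(\delta)$, which combined with $d(x,q)+d(q,y)\approx d(x,y)$ forces an $x$--$y$ geodesic to meet $B(q,O(\delta))$. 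Running a balanced-separator argument inside $S^\ast$ (descend through levels $t,t+1,\dots$ until no single branch holds more than two thirds of the survivors) produces $\Omega(|S^\ast|^2)=\Omega(p^2n^2/D^2)$ unordered pairs for which this happens.

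\emph{Pinning to a point, and the main difficulty.} The ball $B(q,O(\delta))$ contains at most $M^{O(1)}$ points (since $|B(\cdot,k\delta)|\le M^k$), so a further pigeonhole over these points — first over which point of the ball carries the largest share of a given pair's geodesics, then over which point is chosen by the most pairs — isolates a single $a\in B(q,O(\delta))$ carrying at least a $1/M^{O(1)}$ fraction of the traffic of at least a $1/M^{O(1)}$ fraction of the $\Omega(p^2n^2/D^2)$ pairs, hence congestion $\Omega(p^2n^2/(D^2M^{O(1)}))$; tuning the constants to make the exponent of $M$ equal to $3$ gives the stated bound. The real work is the middle step: in a genuinely $\delta$-hyperbolic space geodesics fellow-travel and Gromov products compare in tree-like fashion for free, whereas here only a $p$-fraction of triangles is slim, so the funnel and the separator must be built solely from the slim triangles supplied by the pigeonholes, and the passage from ``a geodesic comes $O(\delta)$-close to $q$'' to ``a definite share of the traffic of a definite share of the pairs crosses the single point $a$'' is exactly what is paid for by the factor $M^3$ — with the $p^2$ and $D^2$ the cost of the two pigeonholes along the way.
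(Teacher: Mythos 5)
Your opening moves coincide with the paper's: fix a basepoint $o$ so that at least $pn^2$ ordered pairs $(u,v)$ span a $\delta$--slim triangle with $o$, and note that the resulting centres are confined to the $\delta$--thickened geodesics out of $o$. The gap is in the middle. After producing $q$ and the set $S^*$ of at least $pn/D$ vertices whose level--$t$ projections land in $B(q,O(\delta))$, you need \emph{quadratically} many pairs whose geodesics meet $B(q,O(\delta))$, and you try to harvest them from $S^*\times S^*$. But nothing guarantees that the triangles $\triangle_{xyo}$ for $x,y\in S^*$ are $\delta$--slim: the $p$--fraction hypothesis was already spent on your set $\mathcal P$, and $S^*\times S^*$ is not contained in $\mathcal P$. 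Without slimness the inference ``$(x\mid y)_o=t+O(\delta)$ and $d(x,q)+d(q,y)\approx d(x,y)$ force an $x$--$y$ geodesic to meet $B(q,O(\delta))$'' is simply false in a general metric space: in a theta--graph with two arcs of length $L$ and one of length $L+2\delta$ between $x$ and $y$, the midpoint $q$ of the long arc satisfies $d(x,q)+d(q,y)=d(x,y)+2\delta$ yet lies at distance about $L/2$ from every geodesic $[x,y]$. The ``balanced--separator / branch'' step has the same flaw: it presupposes that geodesics from $o$ organize tree--like past level $t$ and that a geodesic between points in different branches returns to the branch point, which are exactly the hyperbolicity consequences you are not entitled to for these pairs. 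What your double count legitimately delivers is one heavy vertex $x_0$ with at least $pn/D$ partners in $\mathcal P$, i.e.\ only \emph{linear} congestion near $\pi_t(x_0)$.

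The paper closes this gap with a combinatorial lemma your proposal lacks. It colours each slim pair $(u,v)$ by its barycenter $c_{uvo}$ itself (not by its level); since the centres attached to a fixed $u$ all lie in the $\delta$--neighbourhood of the single geodesic $[uo]$, each vertex is incident to at most $DM$ colours, and an entropy argument shows that a bipartite graph with $pn^2$ edges and at most $t$ colours per vertex has some colour on at least $(pn/t)^2$ edges. This yields $(pn/(DM))^2$ pairs sharing essentially the same centre $c$, and for \emph{those} pairs the slimness of $\triangle_{uvo}$ directly places a point of $B(c,\delta)$ on $[uv]$; a final pigeonhole over the at most $M$ points of that ball gives $p^2n^2/(D^2M^3)$. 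The structural point is that the paper only ever invokes slimness for the pairs it actually counts. To repair your argument, replace the level--$t$ binning plus separator by such a colouring lemma, or otherwise prove that quadratically many pairs \emph{of $\mathcal P$} (not of $S^*\times S^*$) share a centre up to $O(\delta)$.
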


Before proving the theorem, we prove the following useful lemma.
\begin{lemma}\label{LL}
Let $G = (U, V, E)$ be a bipartite graph such that $|U| = |V| = n$ and $|E| \geq pn^2$. The edges of $G$ are colored in such a way that every vertex in $U \cup V$ is incident to at most $t$ colors ($u$ is incident to a color $c$ if $u$ is incident to an edge with color $c$).  Then, there exists a color that is used by at least $(pn/t)^2$ edges in $E$. 
\end{lemma}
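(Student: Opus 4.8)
The plan is to run a double-counting argument indexed by the colors, capped off by a single application of Cauchy--Schwarz. For a color $c$ occurring in $G$, write $E_c$ for the set of edges colored $c$, and let $U_c\subseteq U$ and $V_c\subseteq V$ be the sets of vertices incident to a $c$-colored edge. Two elementary facts do all the work. First, every $c$-colored edge joins $U_c$ to $V_c$, so $|E_c|\le |U_c|\,|V_c|$. Second, the hypothesis that each vertex meets at most $t$ colors says precisely that $\sum_{c}[\,u\in U_c\,]\le t$ for every $u\in U$; summing over $u$ gives $\sum_c |U_c|\le tn$, and symmetrically $\sum_c |V_c|\le tn$.

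Set $m:=\max_c |E_c|$, the quantity to be bounded below. Since $|E_c|\le m$ and $|E_c|\le |U_c||V_c|$ hold simultaneously, we get $|E_c|^2\le m\,|U_c|\,|V_c|$, i.e. $|E_c|\le \sqrt{m}\,\sqrt{|U_c|}\,\sqrt{|V_c|}$. Summing over colors and applying Cauchy--Schwarz to the two sequences $(\sqrt{|U_c|})_c$ and $(\sqrt{|V_c|})_c$ yields
\[
pn^2 \;\le\; |E| \;=\; \sum_c |E_c| \;\le\; \sqrt{m}\sum_c \sqrt{|U_c|}\,\sqrt{|V_c|} \;\le\; \sqrt{m}\,\Big(\sum_c |U_c|\Big)^{1/2}\Big(\sum_c |V_c|\Big)^{1/2} \;\le\; \sqrt{m}\cdot tn .
\]
Rearranging gives $\sqrt{m}\ge pn/t$, hence $m\ge (pn/t)^2$, which is exactly the claim (and $|E|\ge pn^2>0$ guarantees at least one color exists).

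I do not expect a genuine obstacle here: every inequality above is forced once the right shape is in hand. The only place the hypothesis is used is the pair of bounds $\sum_c |U_c|\le tn$ and $\sum_c|V_c|\le tn$, and the one mild subtlety is resisting the temptation to first pass to a ``heaviest color at each vertex'' substructure --- that shortcut discards a factor of $t$ on each side that cannot be bought back, whereas keeping all colors and all edges lets the single Cauchy--Schwarz step recover the sharp exponent.
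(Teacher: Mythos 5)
Your proof is correct. Every step checks out: $|E_c|\le |U_c|\,|V_c|$ holds because the graph is simple bipartite, the hypothesis gives $\sum_c|U_c|\le tn$ and $\sum_c|V_c|\le tn$ by counting vertex--color incidences, and the chain $pn^2\le |E|\le\sqrt{m}\sum_c\sqrt{|U_c|}\sqrt{|V_c|}\le\sqrt{m}\,tn$ via Cauchy--Schwarz yields $m\ge (pn/t)^2$ exactly as claimed. However, your route is genuinely different from the paper's. The authors argue information-theoretically: they pick a uniformly random edge $(u,v)$ of color $c$, set $A=u$, $B=v$, $C=c$, and use $h(A),h(B)\le\log n$, $h(A,B)\ge\log(pn^2)$ (so $I(A;B)\le\log(1/p)$) together with $h(C|A),h(C|B)\le\log t$ to derive $h(C)\le h(C|A)+h(C|B)+I(A;B)\le\log(t^2/p)$; since Shannon entropy dominates min-entropy, some color carries probability mass at least $p/t^2$, i.e.\ at least $pn^2\cdot p/t^2=(pn/t)^2$ edges. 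The two arguments are morally the same inequality in different clothing --- your Cauchy--Schwarz step is the counting shadow of their entropy subadditivity --- and they give the identical constant. What each buys: yours is entirely elementary and self-contained, needing no probabilistic or information-theoretic machinery; theirs packages the bookkeeping into standard entropy identities and extends more mechanically to settings with more than two ``sides'' conditioning the color. Your closing remark about not passing to a heaviest-color substructure is a fair observation, though for the paper's application (where only the order of magnitude in $n$ matters and $t=DM$ is treated as a parameter) either constant would have sufficed.
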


\begin{proof}
Define three random variables $A,B$ and $C$ as follows. We randomly select an edge $(u,v) \in E$ and let  $A=u, B=v$ and $C$ be the color of the edge $(u,v)$. 

Since $h(A) \leq \log n$, $h(B) \leq \log n$ and $h(A, B) \geq \log (pn^2) = 2\log n - \log(1/p)$ we have that 
$$
I(A;B) = h(A) + h(B) - h(A,B) \leq \log (1/p)$$
where $h$ is the entropy function and $I(A;B)$ is the mutual information between $A$ and $B$. Moreover, if we know $A = u$, then there can be at most $t$ possible colors for $C$. Thus, we have $h(C|A) \leq \log t$. Similarly, $h(C|B) \leq \log t$. Hence,
\begin{align*}
h(C|B) &\geq I(C; A| B) = h(A|B) - h(A|C,B) = h(A) - I(A; B) - h(A|C,B) \\
&\geq h(A) - I(A;B) - h(A|C) = I(A;C) - I(A; B).
\end{align*}

Thus $h(C) = h(C|A) + I(A; C) \leq h(C|A) + h(C|B) + I(A;B) \leq \log(t^2/p)$.

Notice that $|E| \geq pn^2$. The inequality implies that there must be a color that is used by at least $pn^2/(t^2/p) = (pn/t)^2$ edges. 
\end{proof}

Now, we proceed to prove the Theorem.  

\begin{proof}
For any 3 points $u, v, w \in X$, let $c_{uvw}$ be the barycenter of the triangle $\triangle _{uvw}$. By a simple counting argument, there must be a point $w \in X$ such that for at least $p$ fraction of the ordered pairs $(u, v) \in X \times X$ the triangle $\triangle_{uvw}$ is $\delta$--slim.  We fix such a point $w$ from now on.  Define the bipartite graph $G = (U = X, V = X, E)$ as follows.  For any two vertices $u \in U$ and $v \in V$ there is an edge  $(u, v) \in E$ if and only if the triangle $\triangle_{uvw}$ is $\delta$--slim. The color of $(u,v) \in E$ is $c_{uvw}$.  Then, $|E| \geq pn^2$.  Moreover, if $c$ is the color of $(u,v)$ then $c$ is in the $\delta$--neighbourhood of $[uw]$. Thus, any vertex $u$ can be incident to at most $DM$ colors.  By Lemma~\ref{LL}, there must be a color $c$ that is used by at least $p^2n^2/(DM)^2$ edges in $E$.  Notice that for each such edge $(u, v) \in E$, $c$ is in the $\delta$--neighbourhood of $[uv]$. Thus, $[uv]$ contains a vertex in the ball $B(c, \delta)$.  Since $\card{B(c, \delta)} \leq M$, for some vertex $c' \in B(c, \delta)$ and $p^2n^2/(D^2M^3)$ different ordered pairs $(u, v) \in X \times X$ the geodesic segment $[uv]$ contains $c'$. Therefore, the congestion at the vertex $c'$ is at least $p^2n^2/(D^2M^3)$.
\end{proof}

\section{Congestion on Product of Trees}

Let $T_{d}^{k}$ be the infinite graph defined as the product of $k$ infinite trees of degree $d$. For some fixed integers $d \geq 2$ and $ k\geq 1$. Let us fix an arbitrary vertex $v^*$ in $T_{d}^{k}$ and let $B(v^*,r)$ be the ball of centrer $v^*$ and radius $r$ in this graph. Let $n=n(r)$ be the cardinality of $B(v^*,r)$. It is easy to see that $|B(v^*,r)|=C_{d,k}(d-1)^{rk}$ where $C_{d,k}$ is a constant that depends on $d$ and $k$ and independent on $r$. We are interested in the asymptotic traffic behavior at $v^*$ as $r$ increases. More precisely, we prove the following theorem.

\begin{theorem}
The traffic load at $v^*$ behaves as $\Theta\big(n^2/\log_{d-1}^{k-1}(n)\big)$ as $r$ increases.
\end{theorem}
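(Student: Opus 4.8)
The plan is to evaluate the load $\mathcal{L}_n(v^*)$, up to multiplicative constants depending only on $d$ and $k$, by expanding it over the geodesics of $T_d^k$, collapsing the resulting sum with a Vandermonde identity, and comparing the answer with $n=|B(v^*,r)|$. Write $v^*=(v_1^*,\dots,v_k^*)$. With the product metric $d(u,w)=\sum_{i=1}^k d_{T_d}(u_i,w_i)$, a geodesic of $T_d^k$ from $u$ to $w$ is an arbitrary interleaving (a ``shuffle'') of the $k$ unique tree geodesics $[u_i,w_i]$, so there are $\binom{|D|}{D_1,\dots,D_k}$ of them, where $D_i=d_{T_d}(u_i,w_i)$ and $|D|=\sum_iD_i$. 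The vertex $v^*$ lies on some geodesic from $u$ to $w$ if and only if $v_i^*\in[u_i,w_i]$ for every $i$; when this holds, set $a_i=d_{T_d}(u_i,v_i^*)$ and $b_i=d_{T_d}(v_i^*,w_i)$, so $D_i=a_i+b_i$, and the fraction of $u$--$w$ geodesics through $v^*$ equals
\[
\phi(a,b)\;:=\;\frac{\binom{|a|}{a_1,\dots,a_k}\binom{|b|}{b_1,\dots,b_k}}{\binom{|a|+|b|}{a_1+b_1,\dots,a_k+b_k}},
\]
the probability that a uniform random shuffle reaches the intermediate configuration $(a_1,\dots,a_k)$. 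Balls of $T_d^k$ are \emph{not} geodesically convex, but $d$ and $d_{G_n}$ agree on $B(v^*,r)$ and every geodesic through $v^*$ already stays inside $B(v^*,r)$; hence passing from $T_d^k$--geodesics to $G_n$--geodesics can only \emph{increase} the load at $v^*$, so only the upper bound is affected and I defer that subtlety to the last paragraph. Grouping pairs $(u,w)$ by the profiles $a=(a_i)_i$, $b=(b_i)_i$ and using that a sphere of radius $t$ in $T_d$ has $\Theta((d-1)^t)$ vertices, and that for a fixed $u_i$ at distance $a_i$ from $v_i^*$ exactly $\Theta((d-1)^{b_i})$ vertices $w_i$ lie at distance $b_i$ with $v_i^*\in[u_i,w_i]$ (absolute constants, including the degenerate cases $a_i=0$ or $b_i=0$), one obtains
\[
\mathcal{L}_n(v^*)\;=\;\Theta\!\Bigg(\sum_{\substack{a,b\in\mathbb{Z}_{\ge 0}^k\\ |a|\le r,\ |b|\le r}}(d-1)^{|a|+|b|}\,\phi(a,b)\Bigg).
\]

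\textbf{Collapsing the sum.} The crucial observation is that the weight $(d-1)^{|a|+|b|}=(d-1)^{|a+b|}$ depends on $(a,b)$ only through $m:=a+b$. By the multidimensional Vandermonde identity $\sum_{a+b=m,\ |a|=A}\binom{A}{a_1,\dots,a_k}\binom{|m|-A}{b_1,\dots,b_k}=\binom{|m|}{m_1,\dots,m_k}$ one gets $\sum_{a+b=m,\ |a|=A}\phi(a,b)=1$ for each admissible $A$; summing over $A$ subject to $|a|\le r$ and $|b|\le r$, i.e.\ $\max(0,|m|-r)\le A\le\min(r,|m|)$, yields
\[
\sum_{\substack{a+b=m\\ |a|\le r,\ |b|\le r}}\phi(a,b)\;=\;\min\bigl(|m|,\,2r-|m|\bigr)+1 .
\]
Since $\#\{m\in\mathbb{Z}_{\ge 0}^k:|m|=M\}=\binom{M+k-1}{k-1}=\Theta\bigl((M+1)^{k-1}\bigr)$, the double sum equals $\Theta\bigl(\sum_{M=0}^{2r}(d-1)^{M}\binom{M+k-1}{k-1}(\min(M,2r-M)+1)\bigr)$. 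This one--variable sum is geometric in $M$, hence concentrated within $O(1)$ of $M=2r$: its $M=2r$ term is already of order $(d-1)^{2r}r^{k-1}$, while substituting $M=2r-j$ and using $\min(M,2r-M)\le j$ and $\binom{M+k-1}{k-1}\le\binom{2r+k-1}{k-1}$ bounds it by $O\bigl((d-1)^{2r}r^{k-1}\sum_{j\ge0}(j+1)(d-1)^{-j}\bigr)=O\bigl((d-1)^{2r}r^{k-1}\bigr)$ (the geometric series converges since $d\ge3$). Therefore $\mathcal{L}_n(v^*)=\Theta\bigl(r^{k-1}(d-1)^{2r}\bigr)$.

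\textbf{Comparison with $n$.} A direct count — for instance $n=[x^r]\,\tfrac{1}{1-x}\bigl(\tfrac{1+x}{1-(d-1)x}\bigr)^{k}$, whose dominant singularity for $d\ge 3$ is the order--$k$ pole at $x=1/(d-1)$ — gives $n=|B(v^*,r)|=\Theta\bigl(r^{k-1}(d-1)^{r}\bigr)$. Hence $\log_{d-1}n=r+O(\log r)$, so $\log_{d-1}^{k-1}(n)=\Theta(r^{k-1})$ and $n^2=\Theta\bigl(r^{2(k-1)}(d-1)^{2r}\bigr)$, whence
\[
\frac{n^2}{\log_{d-1}^{k-1}(n)}\;=\;\Theta\bigl(r^{k-1}(d-1)^{2r}\bigr)\;=\;\Theta\bigl(\mathcal{L}_n(v^*)\bigr),
\]
which is the theorem. (The choice of ordered versus unordered pairs, and whether an endpoint counts as ``passed through'', changes $\mathcal{L}_n(v^*)$ only by a constant factor and an additive $O(n)$, both absorbed.)

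\textbf{Where the work lies.} The decisive step is the remark that $(d-1)^{|a|+|b|}$ is a function of $a+b$ alone: it lets Vandermonde annihilate the $\phi$--weights and collapses the double sum over profiles into the one--variable sum of the previous paragraph, after which only a localized geometric estimate is needed; everything before it is bookkeeping about tree geodesics and sphere sizes. The one genuinely delicate point is the upper bound with $G_n$--geodesics. Because $B(v^*,r)$ is not convex, for ``atypical'' pairs — those in which $u$ and $w$ put their mass on disjoint sets of coordinates — only a vanishing fraction of the $T_d^k$--geodesics remain in $B(v^*,r)$, so the true per--pair contribution $f_{u,w}(v^*)$ can exceed $\phi(a,b)$. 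One checks that for ``typical'' pairs (every coordinate carrying a comparable part of $a$ and of $b$) a $1-o(1)$ fraction of the shuffles do stay inside — the ascending moves of a random shuffle cluster near the far end, so the constraint that the distance to $v^*$ never exceed $r$ holds with room — whence $f_{u,w}(v^*)=\Theta(\phi(a,b))$ there; and the atypical pairs contribute only $O\bigl(((d-1)^2/4)^{r}\,\mathrm{poly}(r)\bigr)=o\bigl(r^{k-1}(d-1)^{2r}\bigr)$, because the multinomial denominators governing them are exponentially large while their number is only $\Theta((d-1)^{2r})$. Carrying out this last verification cleanly is the main obstacle.
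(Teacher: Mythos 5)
Your argument is essentially the paper's own: group pairs by their coordinate-wise distance profiles, identify the fraction of geodesics through $v^*$ with the multinomial shuffle probability, collapse the profile sum via the fact that these probabilities sum to one (the paper's $\sum_{\vec{r_1}} f(\vec{a},\vec{r_1})=1$ is your Vandermonde step), observe the radial sum is dominated by its top terms to get $\Theta(r^{k-1}(d-1)^{2r})$, and compare with $n=\Theta(r^{k-1}(d-1)^{r})$. The one point where you go beyond the paper --- the non-convexity of $B(v^*,r)$, i.e.\ that $G_n$-geodesics are only a subset of the $T_d^k$-geodesics, which affects the upper bound --- is a genuine subtlety that the paper silently ignores; your lower-bound argument for it is complete, and your sketch for the upper bound is plausible though left unverified.
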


\begin{proof}

Let $0 \leq r_1, r_2 \leq r$ be two integers. We random select two vertices such that $d(v^*, v_1) = r_1$, $d(v^*, v_2) = r_2$.  Then, randomly select a shortest path between $v_1$ and $v_2$. We consider the probability that the selected path contains $v^*$. 

We shall denote $v_1 = (v_{1,1}, v_{1,2}, \cdots, v_{1,k})$, where $v_{1,i}$ is a vertex in tree $T_d$, denoting the $i$-th component of $v_1$.  Similarly, let $v_2 = (v_{2,1}, v_{2,2}, \cdots, v_{2,k})$. $v^*$ is the point in $T_d^k$ where all components are the root of $T_d$.  $v^*$ is in some shortest path between $v_1$ and $v_2$ iff for every $i \in [k]$, the root of $T_d$ is in the shortest path between $v_{1,i}$ and $v_{2,i}$. 

Let $r_{1, i}$ (resp. $r_{2,i}$) be the depth of $v_{1, i}$ (resp. $v_{2,i}$). Then $\sum_{i = 1}^{k}r_{1,i} = r_1$ and $\sum_{i = 1}^{k}r_{2,i} = r_2$. We say $v_1$ has pattern $\vec{r_1}=(r_{1,1}, r_{1,2}, \cdots, r_{1,k})$ and $v_2$ has pattern $\vec{r_2}=(r_{2,1}, r_{2,2}, \cdots, r_{2,k})$. Under the condition that the patterns of $v_1$ and $v_2$ are  $\vec{r_1}$ and  $\vec{r_2}$ respectively, the probability that $v^*$ is in some shortest path between $v_1$ and $v_2$ is at least $(1-1/d)^k = \Theta(1)$. (More precisely, it is exactly $(1-1/d)^t$, where $t$ is the number of integers $i$ such that $r_{1,i} > 0$ and $r_{2, i} > 0$.)   

For fixed $\vec{r_1}$, the number of vertices $v_1$ such chat $d(v^*, v_1) = r_1$ is exactly $d^{r_{1,1}}d^{r_{1,2}} \cdots d^{r_{1,k}} = d^{r_1}$. Let $M_1 = \Theta(r_1^{k-1})$ be the number of different patterns $\vec{r_1}=(r_{1,1}, r_{1,2}, \cdots, r_{1,k})$ such that $r_{1,1} + r_{1,2} + \cdots + r_{1, k} = r_1$. Thus, the probability that $v_1$ has pattern $\vec{r_1}$ is exactly $1/M_1$.  Define $M_2$ similarly.  Then, for fixed $\vec{r_1}$ and $\vec{r_2}$, the probability that $v_1$ and $v_2$ has patterns $\vec{r_1}$ and $\vec{r_2}$ respectively is $1/(M_1M_2)$. Conditioned on this event, the probability that $v^*$ is in some shortest path between $v_1$ and $v_2$ is $\Theta(1)$.  Conditioned on this event, the probability that the selected shortest path contain $v^*$ is $ f(\vec{r_1} + \vec{r_2}, \vec{r_1})$, where $f(\vec{a}, \vec{r_1})$ is the probability that a random shortest path from $(0,0, \cdots, 0)$ to $\vec{a}$ in the $k$-dimensional grid contains point $\vec{r_1}$.

Consider all patterns $\vec{r_1}$ and $\vec{r_2}$, the probability that $v^*$ is in the selected shortest path is 

\begin{align*}
&\sum_{\vec{r_1}, \vec{r_2}:*}\frac{1}{M_1M_2} \Theta(1)f(\vec{r_1} + \vec{r_2}, \vec{r_1}) = \frac{\Theta(1)}{M_1M_2}\sum_{\vec{a}:\#}\sum_{\vec{r_1}:\&} f(\vec{a}, \vec{r_1}) \\
&= \frac{\Theta(1)}{M_1M_2}\sum_{\vec{a}:\#} 1 = \Theta\left(\frac{(r_1+r_2)^{k-1}}{r_1^{k-1}r_2^{k-1}}\right) = \Theta\left(\frac{1}{\min\set{r_1,r_2}^{k-1}}\right).
\end{align*}

Now, we consider all possible combinations of $r_1$ and $r_2$. If we randomly select a vertex $v_1$ in the ball $B(v^*, r)$,  the probability that $d(v^*, v_1) = r_1$ is $\Theta\left(\frac{r_1^{k-1}d^{r_1}}{r^{k-1}d^r}\right)$. Thus, the final probability that $v^*$ is in the selected shortest path is 
\begin{align*}
&\sum_{r_1 \leq r, r_2 \leq r}\Theta\left(\frac{r_1^{k-1}d^{r_1}}{r^{k-1}d^r}\times \frac{r_2^{k-1}d^{r_2}}{r^{k-1}d^r}\times \frac{1}{\min\set{r_1,r_2}^{k-1}}\right)\\
&=\frac{1}{r^{2k-2}d^{2r}}\Theta\left(\sum_{r_1,r_2:0\leq r_1 \leq r_2 \leq r}\frac{(r_1r_2)^{k-1}d^{r_1+r_2}}{r_1^{k-1}}\right)\\
&=\frac{1}{r^{2k-2}d^{2r}}\Theta\left(\sum_{r_1:0\leq r_1\leq r}r^{k-1}d^{r_1+r}\right) \\
&= \frac{1}{r^{2k-2}d^{2r}}\Theta\left(r^{k-1}d^{2r}\right) = \Theta\left(\frac{1}{r^{k-1}}\right).
\end{align*}

Congestion of $v^*$ is $\Theta(d^{2r}r^{k-1})$. Thus, fraction = $1/\Theta(r^{k-1}) = 1/\log_{d-1}^{k-1}(n)$.
\end{proof}

Traffic congestion in product of trees with different degrees is easier to analyze. For instance, it is not difficult to see that the maximum vertex congestion under geodesic routing for $T_{d_1}\times T_{d_2}$ where $3\leq d_1<d_2$ behaves as $\Theta(n^2)$ since the larger tree dominates the other due to the exponential growth. More generally, the following result is true.

\begin{theorem}
Let $p\geq 1$, $3\leq d_1<d_2<\ldots<d_{p}$ and $\{k_{i}\}_{i=1}^{p}$ be a sequence of positive integers. Let $X=\prod_{i=1}^{p}T_{d_i}^{k_i}$. Then 
the traffic load at $v^*$ behaves as $\Theta\big(n^2/\log_{d_p-1}^{k_p-1}(n)\big)$ as $r$ increases.
\end{theorem}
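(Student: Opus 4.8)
The plan is to reduce to the single-degree case, i.e.\ to the previous theorem, by observing that among the $K:=\sum_{i=1}^{p}k_{i}$ tree factors of $X$, the $k_{p}$ copies of $T_{d_{p}}$ of largest degree govern all the asymptotics while the remaining ``light'' factors (those of degree $<d_{p}$) contribute only lower order corrections. If $p=1$ the statement \emph{is} the previous theorem, so assume $p\ge 2$, put $b:=d_{p}-1$ and $\rho:=(d_{p-1}-1)/(d_{p}-1)<1$. To each vertex $v$ with $d(v^{*},v)=r_{1}$ associate its depth vector $\vec s\in\Z^{K}$ recording its depth in each of the $K$ trees, and split it into a heavy part (the $k_{p}$ coordinates of degree $d_{p}$), with coordinate sum $R$, and a light part, with sum $L=r_{1}-R$. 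Since $T_{d_{j}}$ has $\Theta((d_{j}-1)^{m})$ vertices at depth $m$, the number of vertices with depth vector $\vec s$ is $\Theta\!\big(b^{R}\prod_{j\ \mathrm{light}}(d_{j}-1)^{s_{j}}\big)\le c_{K}\,b^{r_{1}}\rho^{L}$ for a constant $c_{K}$, and it is $\Theta(b^{r_{1}})$ when $L=0$. Summing over the $\Theta(R^{\,k_{p}-1})$ heavy patterns and over the geometrically damped light patterns shows that the number $\mathcal N(r_{1})$ of vertices at distance exactly $r_{1}$ is $\Theta(r_{1}^{\,k_{p}-1}b^{r_{1}})$; hence $n=\card{B(v^{*},r)}=\Theta(r^{\,k_{p}-1}b^{r})$ and $\log_{d_{p}-1}(n)=\Theta(r)$, so it suffices to show that the fraction of traffic through $v^{*}$ is $\Theta(1/r^{\,k_{p}-1})$.

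For the lower bound, restrict attention to ordered pairs $(v_{1},v_{2})$ both of whose light depths vanish, i.e.\ both lying in the sub-product $Y:=T_{d_{p}}^{k_{p}}$ through $v^{*}$. By the volume count these form a $\Theta(1)$ fraction of $B(v^{*},r)^{2}$, and since geodesics of $X$ between two vertices of $Y$ stay in $Y$ and have the same length there, a uniform random geodesic of $X$ between such $v_{1},v_{2}$ is a uniform random geodesic of $Y$ and passes through $v^{*}$ with the probability computed in the previous theorem, namely $\Theta(1/r^{\,k_{p}-1})$. Hence the load at $v^{*}$ is $\Omega(n^{2}/r^{\,k_{p}-1})$.

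For the matching upper bound I would rerun the computation of the previous theorem while carrying the light coordinates along. Fix $r_{1},r_{2}\le r$ and condition on $d(v^{*},v_{1})=r_{1}$, $d(v^{*},v_{2})=r_{2}$. Geodesics in a tree are unique, so a uniform random geodesic from $v_{1}$ to $v_{2}$ is a uniform random monotone lattice path in $\Z^{K}$ between the two depth vectors; it can contain $v^{*}$ only if in each of the $K$ factors the root lies on the corresponding path, and when it does it contains $v^{*}$ with probability $f_{K}(\vec s^{(1)}+\vec s^{(2)},\vec s^{(1)})$, the chance that a random monotone path from $\vec 0$ to $\vec s^{(1)}+\vec s^{(2)}$ passes through $\vec s^{(1)}$. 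Bounding the number of vertices with depth vector $\vec s^{(i)}$ by $c_{K}b^{r_{i}}\rho^{L_{i}}$ ($L_{i}$ the light sum of $\vec s^{(i)}$), grouping the double sum by $\vec a:=\vec s^{(1)}+\vec s^{(2)}$ — so that the factor $\rho^{L_{1}+L_{2}}=\rho^{\mathcal L(\vec a)}$ depends only on $\vec a$ — and using the identity $\sum_{\vec s^{(1)}}f_{K}(\vec a,\vec s^{(1)})=1$ over the fixed level $\card{\vec s^{(1)}}_{1}=r_{1}$, the conditional probability that $v^{*}$ is hit is at most
\[
\frac{c_{K}^{2}\,b^{\,r_{1}+r_{2}}}{\mathcal N(r_{1})\,\mathcal N(r_{2})}\sum_{|\vec a|_{1}=r_{1}+r_{2}}\rho^{\mathcal L(\vec a)}\;=\;O\!\left(\frac{(r_{1}+r_{2})^{\,k_{p}-1}}{r_{1}^{\,k_{p}-1}\,r_{2}^{\,k_{p}-1}}\right)\;=\;O\!\left(\frac{1}{\min\set{r_{1},r_{2}}^{\,k_{p}-1}}\right),
\]
where we used that $\sum_{|\vec a|_{1}=r_{1}+r_{2}}\rho^{\mathcal L(\vec a)}$ is at most $\Theta((r_{1}+r_{2})^{k_{p}-1})$ times the convergent series $\sum_{L\ge 0}\mathrm{poly}(L)\rho^{L}$. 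Weighting by the distance law $\Prob[d(v^{*},v_{1})=r_{1}]=\Theta(r_{1}^{\,k_{p}-1}b^{r_{1}}/n)$ and summing over $r_{1},r_{2}\le r$ gives precisely the geometric sum evaluated at the end of the previous proof, with $k$ replaced by $k_{p}$ and $d$ by $b$; it equals $\Theta(1/r^{\,k_{p}-1})$. Combining the two bounds, the fraction of traffic through $v^{*}$ is $\Theta(1/r^{\,k_{p}-1})=\Theta(1/\log_{d_{p}-1}^{\,k_{p}-1}(n))$, and multiplying by the total traffic $\Theta(n^{2})$ gives the claim.

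I expect the upper bound to be the main obstacle, and within it the point requiring care is arranging the estimates so that, after grouping the double sum by $\vec a$, the only residual dependence on the light coordinates is through $\mathcal L(\vec a)$; only then does the exact identity $\sum_{\vec s^{(1)}}f_{K}(\vec a,\vec s^{(1)})=1$ apply and reduce everything to the already-known single-degree computation. What makes this possible is the strict inequality $d_{p-1}<d_{p}$, which gives $\rho<1$, so that the polynomially many configurations placing mass in the light factors together contribute only an $O(1)$ factor rather than an extra power of $r$.
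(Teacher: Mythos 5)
Your proposal is correct, but there is nothing in the paper to compare it against: the authors state this theorem without any proof, offering only the one\--sentence heuristic that ``the larger tree dominates the other due to the exponential growth.'' Your argument is a sound way of making that heuristic rigorous. The key quantitative content you supply \--- and which the paper omits entirely \--- is the heavy/light decomposition of the depth vector with the damping factor $\rho=(d_{p-1}-1)/(d_p-1)<1$, which shows both that $\card{B(v^*,r)}$ has the same order $\Theta(r^{k_p-1}(d_p-1)^r)$ as the ball in the sub\--product $T_{d_p}^{k_p}$ (giving the lower bound by restricting to pairs in that sub\--product, whose geodesics stay there) and that in the upper bound the light coordinates contribute only a convergent factor $\sum_L \mathrm{poly}(L)\rho^L=O(1)$ after grouping by $\vec a=\vec s^{(1)}+\vec s^{(2)}$ and applying the level\--sum identity $\sum_{\vec s^{(1)}} f_K(\vec a,\vec s^{(1)})=1$, exactly as in the paper's proof of the equal\--degree case. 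Two minor points worth a sentence each in a polished write\--up: the degenerate levels $r_i=0$ should be handled with the convention $\Theta(\max(r_i,1)^{k_p-1})$, as is already implicitly needed in the paper's own proof of the preceding theorem; and for the upper bound you should state explicitly that you are bounding the indicator of ``the root lies on every factor geodesic'' by $1$ before invoking $f_K$, since otherwise the event structure of the two conditionings could be confused.
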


\section{Traffic on Expanders}\label{expanders}

In this Section we construct a constant degree family of expanders with $\Theta(n^2)$ congestion. This result is in contrast to random regular graphs that have congestion $O(n\log^{3}(n))$ (as shown in \cite{T}).

\begin{theorem}
There exists a family $\{G_{n}\}_{n=1}^{\infty}$ of constant--degree expanders with congestion $\Theta(n^2)$.
\end{theorem}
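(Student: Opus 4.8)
The plan is to construct the family explicitly, reconciling two opposing demands: for a vertex $v$ to carry $\Theta(n^2)$ of the flow, $v$ must lie on a positive fraction of the geodesics of a positive fraction of the $\binom{n}{2}$ pairs, whereas expansion forces every subset to have large edge boundary and hence tends to spread geodesics out and concentrate distances near $\log n$. The resolution I would aim for is a graph that is ``hyperbolic near $v$'' while being a genuine expander globally. A natural candidate: let $C$ be the $d$-regular tree truncated at depth $h$, for a large constant $d$; by the computation recalled in the Example of Section~\ref{core}, its root $v$ carries a $(1-1/d)$ fraction of the internal traffic of $C$, i.e. $\Theta(|C|^2)$. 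Now turn $C$ into a bounded-degree expander $G_n$, $n=\Theta((d-1)^h)$, by adding a bounded-degree family of ``repair'' edges among the leaves of $C$ (for instance, identify the leaf set with the vertex set of a bounded-degree expander $D$ via a perfect matching $\phi$, or put a random bounded-degree graph on the leaves directly).

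Two things then have to be checked. First, $G_n$ is an expander: the only subsets $S\subseteq C$ with small tree boundary are the subtrees, for which $\partial_C^{\mathrm{tree}}S$ is a single edge, and one shows by a short counting argument that the repair edges incident to $S$'s leaves contribute $\Omega(|S|)$ to $\partial_{G_n}S$ -- morally, the matching drags each sparse cut of $C$ into $D$, where expansion of $D$ rules it out -- so after a routine case analysis (splitting $S$ according to which of its intersections with the internal vertices, the leaves, and $D$ dominates, and whether $S\cap D$ is aligned with $\phi$ of $S\cap\text{leaves}$) one gets $h(G_n)\ge c>0$. Second, $v$ retains congestion $\Omega(n^2)$: since adding edges only decreases distances, it suffices to show that for a positive fraction of the ordered leaf-pairs $(p,q)$ lying in distinct root-subtrees, the tree geodesic (length $2h$, through $v$) remains a geodesic of $G_n$; summing over those pairs gives $\mathcal{L}_n(v)=\Omega(|C|^2)=\Omega(n^2)$, while $\mathcal{L}_n(v)\le\binom{n}{2}=O(n^2)$ is automatic.

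The main obstacle is the second check: bounding how much the repair edges shorten distances. A path from $p$ to $q$ that is not the tree path must leave $C$ at a leaf $p_1$, traverse the repair structure, re-enter $C$ at a leaf $p_2$, and then stay in $C$, so it has length at least $d_C(p,p_1)+d_C(p_2,q)+d_D(\phi(p_1),\phi(p_2))+2$, and this is $<2h$ only if $d_C(p,p_1)+d_C(p_2,q)+d_D(\phi(p_1),\phi(p_2))\le 2h-3$. I would estimate the number of leaf-pairs admitting such a detour by summing, over the level $\ell$ at which $p_1$ branches off $p$ (so there are about $(d-1)^{\ell}$ choices of $p_1$, whose $\phi$-images lie in a ball of radius about $2\ell$ in $D$, of size about $(\deg D-1)^{2\ell}$), and similarly for $p_2$; the resulting bound is of order $|C|^2$ times a geometric series in $(\deg D-1)^2/(d-1)$ per level, which is a small fraction of $|C|^2$ provided $d$ is taken much larger than $(\deg D-1)^2$ and -- the delicate point -- provided one controls the accumulation of these level-by-level contributions, e.g. by a first-moment bound over a random matching $\phi$, choosing $\phi$ so that at most an $\varepsilon$-fraction of leaf-pairs get shortened. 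Getting this detour estimate to hold simultaneously with the expansion of $G_n$, with constants that together leave a positive fraction of distinct-subtree leaf-pairs untouched, is where essentially all of the work lies; if the parameters cannot be balanced for this particular interface, the same philosophy -- a branching ``core'' whose median vertex is preserved, patched into an expander by long-range repair edges that avoid metric shortcuts -- should be pushed through with a more elaborate choice of repair structure.
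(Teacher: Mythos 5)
Your high-level architecture is exactly the paper's: a tree whose root is forced to carry $\Theta(n^2)$ flow, made into a bounded-degree expander by matching its leaves to the vertices of an auxiliary constant-degree expander, with the two checks being (i) expansion of the glued graph and (ii) that the repair edges do not shorten a constant fraction of the leaf-to-leaf tree geodesics. Your expansion sketch is essentially the paper's case analysis and is fine. The gap is in check (ii), and it is not a matter of unfinished bookkeeping: the specific instantiation you propose (a uniform $d$-regular truncated tree with an unsubdivided matching to a degree-$3$ expander $D$) cannot be closed by the counting you outline. The reason is that in a uniform $(d)$-regular tree of depth $h$ the number of leaves within tree-distance $2t$ of a fixed leaf is $(d-1)^{t}$, i.e.\ the \emph{local} branching rate per unit of path length at the leaves equals the \emph{global} rate $n^{1/2h}=\sqrt{d-1}$. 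Consequently the sum over the two descent depths $j_1,j_2$ and the expander travel $\rho$ subject to the length budget $2j_1+2j_2+\rho+O(1)\le 2h$ is not a convergent geometric series but picks up a factor polynomial in $h$ (one factor of $h$ for each free split of the budget), so the count of shortcut-admitting pairs is of order $n^2\cdot\mathrm{poly}(h)/(d-1)^{O(1)}$, which swamps $n^2$ for large $h$ no matter how large you take $d$; a first-moment bound over a random matching inherits the same $\mathrm{poly}(h)$ loss and also fails. A second, independent gap: you only rule out detours that traverse the repair structure once ("...re-enter $C$ at a leaf $p_2$, and then stay in $C$"), but a competing path may weave between the tree and the expander any number of times, and these higher-rank paths must be counted too.

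The paper closes both gaps with two devices you would need to add. First, the tree is \emph{not} uniform: vertices in depths $0$ to $h/2-1$ have $3$ children and those in depths $h/2$ to $h-1$ have $2$ children, so $n=\sqrt6^{\,h}$ while the number of leaves within distance $2t$ of a given leaf is $\lambda(t)=2^t\le\sqrt6^{\,t}$ with a strict exponential gap in the regime that matters; this turns every budget sum into a genuinely convergent geometric series (ratio $\sqrt{2/3}$) with no polynomial losses. Second, every matching edge and every edge of the auxiliary expander is subdivided into a path of length $c$ for a large constant $c$, so that each traversal of the expander costs length at least $2c$; this makes the contribution of rank-$r$ detours decay like $(C/\sqrt6^{\,c})^{r}$ and the sum over all ranks $r\ge1$ at most $n/2$. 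With those two modifications your plan becomes the paper's proof; without them the detour estimate, which you correctly identify as where all the work lies, does not go through.
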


\subsection{Construction of the Expander Graph}
For an even integer $h$, let $T$ be a tree of depth $h$ defined as follows.  Each node in depth $0$ to $h/2-1$ of $T$ has $3$ children, and each node in depth $h/2$ to $h-1$ has $2$ children (root has depth $0$ and leaves have depth $h$). Thus, $T$ has exactly $n := 3^{h/2} \times 2^{h/2}=6^{h/2}$ leaves.  We use $L(T)$ to denote the set of leaves of $T$.  Define  $\lambda(d)$ to be the number of leaves in a sub--tree of $T$ rooted at some vertex of depth $h-d$ . Then, we have
\[\lambda(d) = \begin{cases}
2^d & 0 \leq 	d \leq h/2 \\
2^{h/2}3^{d-h/2}& h/2 < d \leq h
\end{cases}.
\]

Construct a graph $G$ as follows (see Figure~\ref{fig:constructing-expander}). Let $A$ be a degree--3 expander of size $2n$ and expansion constant $\alpha$.  Let $T_\sfL$ and $T_\sfR$ be two copies of  the tree $T$.  We create a random matching between the $2n$ leaves of $T_\sfL$ and $T_\sfR$ and the vertices of $A$. We add a \emph{matching edge} between each matched pair of vertices. We also add a vertex $v^*$ that is connected to the roots of $T_\sfL$ and $T_\sfR$. Finally, we scale the matching edges and expander edges by a factor of $c$ (i.e., replace those edges with paths of length $c$) for some constant even number $c$ to be determined later. 

\begin{figure}
\centering
\includegraphics[scale=0.4]{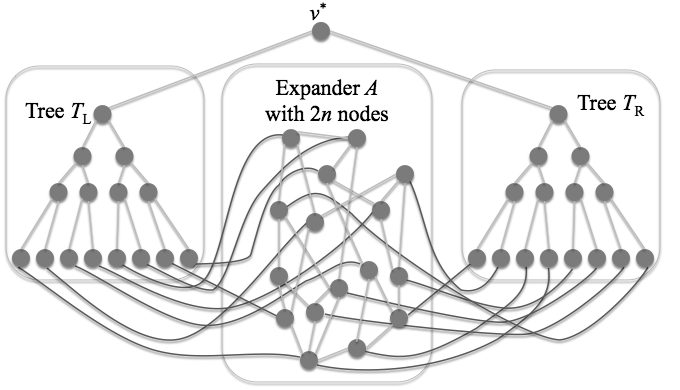}
\caption{Construction of the expander $G$. There are two trees $T_\sfL$ and $T_\sfR$, each with $n$ leaves, a root node $v^*$ and an expander $A$ with $2n$ nodes and expansion constant $\alpha$.  We connect $v^*$ to the two roots of $T_\sfL$ and $T_\sfR$. We create a random matching between the $2n$ leaves of $T_\sfL$ and $T_\sfR$ and the $2n$ vertices of $A$; there is a path of length $c$ connecting each pair in the matching. We also replace edges of $A$ with paths of length $c$.}
\label{fig:constructing-expander}
\end{figure}

\begin{prop}
The graph $G$ is an expander of degree at most $4$ and size $\Theta(n)$.
\end{prop}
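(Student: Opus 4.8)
I would start with the two routine assertions. \emph{Degree:} every internal node of $T_\sfL$ or $T_\sfR$ has a parent and at most three children, the two roots have three children plus the edge to $v^*$, every leaf has a parent and one subdivided matching edge, every vertex of $A$ has three subdivided expander edges and one matching edge, and $v^*$ together with every subdivision vertex has degree $2$; hence $\Delta(G)\le 4$. \emph{Size:} $|V(T_\sfL)|=|V(T_\sfR)|=\Theta(n)$ (a geometric sum), $|V(A)|=2n$, there are $O(n)$ matching and expander edges and each is replaced by a path of the fixed length $c$, adding $O(n)$ vertices, while $|V(G)|\ge|V(A)|=2n$; so $|V(G)|=\Theta(n)$. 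For the expansion I would first pass to the graph $G_0$ obtained by contracting each length-$c$ path back to an edge (i.e.\ $G$ before the scaling step): a routine subdivision estimate shows that subdividing the edges of a bounded-degree graph with $\Theta(|V|)$ edges into paths of a fixed length $c$ changes the edge expansion by at most a constant factor depending only on $c$ and the degree bound (a subset meeting the interiors but not the endpoints of many subdivision paths has bounded ratio, since each such path carries $\ge 2$ boundary edges, and a subset rich in original vertices inherits the expansion of $G_0$ on its trace), so it suffices to prove $h(G_0)\ge\beta$ for a constant $\beta>0$ independent of $h$.

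The key ingredient, which I would establish first, is a property of the tree $T$ reflecting that it is ``everywhere at least $2$-ary'': for every $S_0\subseteq V(T_\sfL)$,
\[
  |\partial_T S_0|+\ell(S_0)\ \ge\ \tfrac12\,|S_0|,
\]
where $\partial_T S_0$ is the set of tree edges of $T_\sfL$ with exactly one endpoint in $S_0$ and $\ell(S_0)$ is the number of leaves of $T_\sfL$ in $S_0$ (likewise for $T_\sfR$). To prove it, call a vertex of $S_0$ \emph{good} if it is a leaf of $T_\sfL$ or has a child outside $S_0$. The non-leaf good vertices inject into $\partial_T S_0$ (send such a vertex to the edge joining it to a child outside $S_0$ -- the edge recovers its upper endpoint), so the number of good vertices is at most $|\partial_T S_0|+\ell(S_0)$. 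Conversely, a non-good vertex is an internal node all of whose $\ge 2$ children lie in $S_0$, hence has $\ge 2$ children in the induced forest $T_\sfL[S_0]$; since in any forest the number of leaves is at least the number of vertices with $\ge 2$ children (an elementary edge count) and every leaf of $T_\sfL[S_0]$ is good, the number of good vertices is at least the number of non-good ones, i.e.\ $\ge|S_0|/2$.

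Then, for $S\subseteq V(G_0)$ with $|S|\le|V(G_0)|/2$ (the vertex $v^*$ and sets of bounded size being handled separately), I would put $S_L=S\cap V(T_\sfL)$, $S_R=S\cap V(T_\sfR)$, $S_A=S\cap V(A)$ with sizes $a_L,a_R,a_A$, and bound $|\partial S|$ from below by the sum of (i) the tree edges it contains, (ii) the matching edges it cuts -- on the $T_\sfL$ side at least $\big|\,|S\cap L(T_\sfL)|-|S\cap W_L|\,\big|$ since that side is a perfect matching between two $n$-sets, where $W_L\subseteq V(A)$ are the partners of $L(T_\sfL)$ -- and (iii) the expander edges it cuts, at least $\alpha\min(a_A,2n-a_A)$. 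Inserting the tree lemma into this (applied to $S_L,S_R$, or to their complements) and using $|S\cap W_L|+|S\cap W_R|=a_A$, I would obtain
\[
  |\partial S|\ \ge\ \tfrac12(a_L+a_R)-a_A
  \qquad\text{and (when }a_A\ge n\text{)}\qquad
  |\partial S|\ \ge\ \tfrac12(\bar a_L+\bar a_R)-(1-\alpha)(2n-a_A),
\]
with $\bar a_L=|V(T_\sfL)|-a_L$, $\bar a_R=|V(T_\sfR)|-a_R$, and then finish by a case analysis on $a_A$: if $a_A\le\tfrac13(a_L+a_R)$ the first bound gives $|\partial S|\ge|S|/8$; if $\tfrac13(a_L+a_R)<a_A\le n$ then $a_A>|S|/4$ and the expander term alone gives $|\partial S|\ge\alpha a_A\ge\alpha|S|/4$; if $n<a_A\le\tfrac32 n$ then $2n-a_A\ge n/2$, so $|\partial S|\ge\alpha n/2=\Omega(|S|)$; and if $a_A>\tfrac32 n$ then $2n-a_A<n/2$ is small while $\bar a_L+\bar a_R=|V(T_\sfL)|+|V(T_\sfR)|-(a_L+a_R)=\Omega(n)$ (using $a_A>\tfrac32 n$, $|S|=O(n)$, and $|V(T_\sfL)|,|V(T_\sfR)|\ge\tfrac32 n$ because the $n/2$ parents of the leaves are internal nodes disjoint from the leaves), so the second bound gives $|\partial S|=\Omega(n)=\Omega(|S|)$. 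Taking the smallest of these constants yields $h(G_0)\ge\beta$, hence $h(G)\ge\beta'>0$.

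The step I expect to be the main obstacle is this last case, $S_A$ occupying almost all of $A$: there neither the expansion of $A$ nor the forward form of the tree lemma by itself suffices, and one must combine the expansion of $A$ applied to the \emph{small} set $V(A)\setminus S_A$, the tree lemma applied to the \emph{large} complements $\bar S_L,\bar S_R$, and the matching-cut bound simultaneously; arranging that the constants produced in all four cases are simultaneously positive is the only point that needs care, everything else being bookkeeping.
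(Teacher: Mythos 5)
Your proof is correct and follows essentially the same route as the paper's: reduce to the unsubdivided graph, decompose the cut into tree edges, matching edges and expander edges, use a counting fact for trees whose internal nodes have at least two children together with the matching-cut and expander bounds, and finish by a case analysis on how $S$ is distributed among $T_\sfL$, $T_\sfR$ and $A$. The only differences are organizational (the paper argues by contradiction with explicit constants like $0.01\alpha$ and $0.15|S|$, while you split directly on $a_A$; you also spell out the proof of the tree lemma, which the paper merely asserts as a ``fact'').
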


\begin{proof}
Since replacing edges with paths of length $c$ only decrease the expansion by a factor of $c$, we only need to consider the graph obtained before the scaling operation.  Let $G'$ be that graph.  For the simplicity of the notation, let $T'$ be the tree rooted at $v^*$ with 2 sub--trees $T_\sfL$ and $T_\sfR$ and $L(T') = L(T_\sfL) \cup L(T_\sfR)$. We also use $I(T')$ to denote the set of inner vertices of $T'$. Assume that $G'$ is not an $0.01\alpha$--expander. Then, let $S \subseteq V(G')$ be a set of size at most $\card{V(G')}/2$ such that $E_{G'}(S, V(G') \setminus S) < 0.01\alpha\card{S}$.

We notice the following two facts: 
\begin{enumerate}
\item If $\card{S \cap I(T')} \geq \card{S \cap L(T')}+s$ then there must be at least $s$ edges between $S \cap V(T')$ and $V(T') \setminus S$.
\vspace{0.3cm}
\item If $\big|\card{S \cap L(T')} - \card{S \cap V(A)} \big| \geq s$ then there must be at least $s$ matching edges between $S$ and $V(G')\setminus S$.
\end{enumerate}
By the first fact, we can assume $\card{S \cap I(T')} \leq 0.6|S|$. Then 
$$
\card{S \cap L(T')} + \card{S \cap V(A)} \geq 0.4|S|.
$$ 
By the second fact, we have $\card{S \cap V(A)} \geq 0.15|S|$. If $\card{S \cap V(A)} \leq \card{V(A)}/2$ then 
$$
E_{G'}(S, V(G')\setminus S) \geq 0.15\alpha|S|.
$$ 
Thus, we have $\card{S \cap V(A)} \geq \card{V(A)}/2$. If $\card{S \cap V(A)} \leq 0.9\card{V(A)}$ then 
$$
E_{G'}(S, V(G') \setminus S) \geq \alpha\card{V(A) \setminus S} \geq \frac{0.1}{0.9}\alpha\card{S \cap V(A)}\geq \alpha|S|/60.
$$
Thus, we have $\card{S \cap V(A)} \geq 0.9\card{V(A)}$, which implies $\card{S \cap L(T')} \geq 0.9\card{V(A)} - 0.1|S|$ by the second fact. Then $|S| \geq 1.8\card{V(A)}-0.1|S|$, implying that 
$$
|S| \geq 1.6\card{V(A)} \geq \frac{1.6}{3}\card{V(G')} > 0.5 \card{V(G')}
$$
which is a contradiction. 
\end{proof}

\subsection{Proof of the High Congestion in $G$}

Consider the set $L(T_\sfL) \times L(T_\sfR)$ of $n^2$ pairs. We shall show that for a constant fraction of pairs $(u,v)$ in this set, the shortest path connecting $u$ and $v$ will contain $r$.  It is easy to see that for every $(u, v) \in L(T_\sfL) \times L(T_\sfR)$, there is a path of length $2h + 2$ connecting $u$ and $v$ that goes through $v^*$.

Focus on the graph  $G\setminus v^*$. We are interested in the number of pairs $(u,v) \in L(T_\sfL) \times L(T_\sfR)$ such that $d_{G\setminus v^*}(u,v) \leq 2h + 2$.  Fix a vertex $u \in L(T_\sfL)$ from now on. Consider the set $\mP$ of simple paths starting at $u$ and ending at $L(T_\sfR)$. We say a path $P \in \mP$ has rank $r$ if it enters and leaves the expander $r$ times.  Notice that we always have $r \geq 1$, since we must use the expander $A$ from $u$ to $L(T_\sfR)$.

For a path $P$ of rank $r$, we define the \emph{pattern} of $P$, denoted by $\ptn(P)$, as a sequence $t = (t_1, t_2, \cdots, t_{2r+1})$ of $2r + 1$ non-negative integers as follows. For $1 \leq i \leq r$, $ct_{2i}$ is the length of the sub-path of $P$ correspondent to the $i$-th traversal of $P$ in the expander $A$. (Recall that we replaced each expander edge with a path of length $c$.)   The path $P$ will contain $r+1$ sub-paths in the two copies of $T$ (the first and/or the last sub-path might have length 0). Let $2t_{2i-1}$ be the length of the $i$-th sub-path in the tree. Notice that $P$ can only enter and leave the trees through leaves and thus the lengths of those sub-paths must be even.  If some path $P \in \mP$ has $\ptn(P) = (t_0, t_1, \cdots, t_{2r+1})$, then the length of $P$ is exactly $\len(t_{[2r+1]}) := 2\sum_{i=0}^{r}t_{2i+1} + c\sum_{i=1}^{r}(t_{2i}+2)$, where $t_{[2r+1]}$ denotes the sequence $(t_1, t_2, \cdots, t_{2r+1})$. Notice that the $c(d_{2i}+2)$ term in the definition of $\len$ includes the $2c$ edges replaced by the 2 matching edges through which the path enters and leaves the expander.

We call a sequence $(t_0, t_1, \cdots, t_{2r+1})$ of non-negative integers \emph{a valid pattern} of rank $r$ if $\len(t_{[2r+1]}) \leq 2h+2$.  

\begin{lemma}
\label{lemma:num-paths-for-a-pattern}
The number of paths $P \in \mP$ with $\ptn(P) = t_{[2r+1]}$ is at most $$\left(\frac{3}{2}\right)^{r}\left[\prod_{i = 1}^{r}\lambda(t_{2i-1})2^{t_{2i}}\right]\lambda(t_{2r+1}).$$
\end{lemma}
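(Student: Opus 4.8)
The plan is to read a path $P \in \mP$ of rank $r$ from its starting vertex $u$ onwards, cut it into its maximal sub‑paths inside the two trees and inside the expander, and bound the number of choices available for each piece. The first point to record is structural: in $G \setminus v^*$, a leaf of a tree is adjacent only to its parent in that tree and to the single vertex of $A$ to which it is matched, while a vertex of $A$ is adjacent only to its (at most $3$) neighbours in $A$ and to its unique matched leaf. Hence $P$ can leave a tree only through a leaf–matching edge and can leave $A$ only through a matching edge, so $P$ alternates strictly between tree sub‑paths and expander sub‑paths; since it begins at $u \in L(T_\sfL)$ and ends in $L(T_\sfR)$, it consists of exactly $r+1$ maximal sub‑paths inside $T_\sfL \cup T_\sfR$ and $r$ maximal sub‑paths inside $A$, the first and last lying in the trees, joined by $2r$ matching edges. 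Because $T_\sfL$ and $T_\sfR$ are connected only through $A$ once $v^*$ is deleted, each tree sub‑path lies entirely in one of the two trees and has leaves as both endpoints; and because the matching is a bijection between $L(T_\sfL) \cup L(T_\sfR)$ and $V(A)$, the matching edge joining one sub‑path to the next is forced by the endpoint of the earlier one.

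Next I would bound the completions of each sub‑path, given its (already determined) starting vertex. A simple path of length $2t$ between two leaves of $T$ must climb to the common ancestor at depth $h-t$ and descend again to some leaf of the sub‑tree rooted there, of which there are at most $\lambda(t)$ (if $t > h$ no such path exists and there is nothing to prove); hence the $i$‑th tree sub‑path, of length $2t_{2i-1}$, has at most $\lambda(t_{2i-1})$ completions once its starting leaf is known, for $i = 1,\dots,r+1$. A simple path in $A$ of length $ct_{2i}$ uses $t_{2i}$ edges of $A$, and starting from a known vertex it has at most $3$ choices for its first edge and at most $2$ for each later edge, so at most $3\cdot 2^{t_{2i}-1} = \tfrac{3}{2}\cdot 2^{t_{2i}}$ completions when $t_{2i}\ge 1$, and exactly one (which is $\le \tfrac{3}{2}\cdot 2^{t_{2i}}$) when $t_{2i}=0$. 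The matching edges that bracket each expander traversal — the two length‑$c$ paths absorbed into the $+2$ in the definition of $\len$ — are forced and contribute no choices.

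Assembling this, every $P$ with $\ptn(P)=t_{[2r+1]}$ is recovered by a single sequence of choices: pick the first tree sub‑path from the fixed vertex $u$; follow the forced matching edge; pick the first expander sub‑path; follow the forced matching edge; and so on through all $2r+1$ sub‑paths. Therefore the number of such paths is at most
$$\Bigl[\prod_{i=1}^{r}\lambda(t_{2i-1})\Bigr]\lambda(t_{2r+1})\cdot\prod_{i=1}^{r}\tfrac{3}{2}\cdot 2^{t_{2i}} = \Bigl(\tfrac{3}{2}\Bigr)^{r}\Bigl[\prod_{i=1}^{r}\lambda(t_{2i-1})2^{t_{2i}}\Bigr]\lambda(t_{2r+1}),$$
which is exactly the claimed bound. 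I do not expect a genuine obstacle here; the only slightly delicate points are verifying that the tree/expander alternation really is forced by the structure of $G \setminus v^*$ (so that the pattern pins down the coarse shape of $P$) and that the matching edges carry no freedom, so that the count is a clean product of independent choices. The estimate is deliberately lossy — it ignores that a simple path may not backtrack at the apex of a tree sub‑path and that interior sub‑paths necessarily have positive length — but that slack is harmless for the application that follows.
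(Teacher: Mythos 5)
Your proof is correct and follows essentially the same route as the paper's: decompose the path into alternating tree and expander sub-paths joined by forced matching edges, bound each tree sub-path of length $2t$ by $\lambda(t)$ completions and each expander sub-path by $\tfrac{3}{2}\cdot 2^{t}$ completions, and multiply. Your write-up merely spells out the structural facts (the forced alternation and the bijectivity of the matching) that the paper leaves implicit.
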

\begin{proof}
For any leaf $v$ in $T$, we have at most $\lambda(t)$ possible simple paths of length $2t$ in $T$ that start at $v$ and end at $L(T)$. For a degree-$3$ graph(in particular, a degree-3 expander), we have at most $\frac32\times 2^t$ simple paths that start at any fixed vertex. From a leaf in $T_\sfL$ or $T_\sfR$, we only have one way to enter the expander. Similarly, we only have one way to leave the expander from a vertex in the expander. Thus, the total number of simple paths of pattern $t_{[2r+1]}$ is at most 
\[
\lambda(t_1) \left(\frac 32 2^{t_2}\right)\lambda(t_3) \left(\frac 32 2^{t_4}\right)\cdots \lambda(t_{2r+1})=\left(\frac{3}{2}\right)^{r}\left[\prod_{i = 1}^{r}\lambda(d_{2i-1})2^{d_{2i}}\right]\lambda(t_{2r+1}).
\]
\end{proof}

We fix the rank $r \geq 1$ from now on.   For some integer $\ell \in [0, r]$, suppose $t_{[2\ell]}$ is a prefix of some valid pattern of rank $r$. Define $W(t_{[2\ell]})$ to be the maximum $t_{2\ell+1}$ such that $t_{[2\ell+1]}$ is a prefix of some valid pattern of rank $r$.  That is, 
\[
W\left(t_{[2\ell]}\right)=
\floor{
\frac{2h+2 - 2\sum_{i=1}^{\ell}t_{2i-1} - c\sum_{i = 1}^{\ell}(t_{2i}+2) -2c(r-l)}{2}
},
\]
Similarly,  we define 
\[
W\left(t_{[2\ell - 1]}\right) = 
\floor{\frac{2h+2- 2\sum_{i=1}^{\ell}t_{2i-1} - c\sum_{i = 1}^{\ell-1}(t_{2i}+2) - 2c(r-l)}{c}
}-2.
\]

For some prefix $t_{[2\ell]}$ of some valid pattern of rank $r$, let $\mP_{t_{[2\ell]}}$ be the set of paths $P \in \mP$ of rank $r$ such that $t_{[2\ell]}$ is a prefix of $\ptn(P)$. We prove that 

\begin{lemma} 
\label{lemma:num-paths-for-prefix}
For any $0 \leq l \leq r$, we have 
\[
\card{\mP_{t_{[2\ell]})}} \leq 2C^{r-\ell}\left(\frac32\right)^\ell \prod_{i=1}^{l}\left[\lambda(t_{2i-1})2^{t_{2i}}\right]\times \sqrt{6}^{W\left(t_{[2\ell]}\right)}
\]
for some large enough constant $C$. 
\end{lemma}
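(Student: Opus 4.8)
The plan is to prove the bound by downward induction on $\ell$, starting from $\ell = r$ and decreasing to $\ell = 0$. Let me reconsider; the cleanest approach is actually an upward induction: we build the path segment by segment, and at each stage we sum over the possible value of the next coordinate $t_{2\ell+1}$ or $t_{2\ell+2}$. The base case is $\ell = r$, where $\mP_{t_{[2r]}}$ consists of paths whose rank-$r$ pattern has the prescribed prefix $t_{[2r]}$; the remaining freedom is the final tree-segment length $t_{2r+1}$, which ranges over $0 \le t_{2r+1} \le W(t_{[2r]})$. By Lemma~\ref{lemma:num-paths-for-a-pattern}, the number of paths with a fully specified pattern $t_{[2r+1]}$ is at most $(3/2)^r \prod_{i=1}^r [\lambda(t_{2i-1}) 2^{t_{2i}}] \lambda(t_{2r+1})$, so summing over $t_{2r+1}$ from $0$ to $W(t_{[2r]})$ we get a factor $\sum_{t} \lambda(t)$. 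Here I would use the crude bound $\sum_{t=0}^{W} \lambda(t) \le \text{const}\cdot \lambda(W) \le \text{const}\cdot \sqrt{6}^{W}$: indeed $\lambda$ grows geometrically with ratio $2$ in the first half and ratio $3$ in the second half, so a geometric-sum estimate gives $\sum_{t \le W}\lambda(t) = O(\lambda(W))$, and $\lambda(W) \le 6^{h/2}$ but more usefully $\lambda(W) \le \sqrt 6^{\,W}$ whenever... — this last inequality needs care and is flagged below as the main obstacle.

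For the inductive step, suppose the bound holds for prefixes of length $2(\ell+1)$; we prove it for a prefix $t_{[2\ell]}$. A path counted by $\mP_{t_{[2\ell]}}$ is obtained by first choosing the tree-segment length $t_{2\ell+1} \in [0, W(t_{[2\ell]})]$, then choosing the expander-traversal length $t_{2\ell+2}$, and then extending; thus
\[
\card{\mP_{t_{[2\ell]}}} \;\le\; \sum_{t_{2\ell+1}=0}^{W(t_{[2\ell]})} \;\sum_{t_{2\ell+2} \ge 0} \card{\mP_{t_{[2\ell+2]}}}.
\]
Apply the inductive hypothesis to bound $\card{\mP_{t_{[2\ell+2]}}}$ by $2 C^{r-\ell-1}(3/2)^{\ell+1}\prod_{i=1}^{\ell+1}[\lambda(t_{2i-1})2^{t_{2i}}] \cdot \sqrt6^{\,W(t_{[2\ell+2]})}$. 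The factor $\prod_{i=1}^{\ell+1}$ splits as $\prod_{i=1}^{\ell} [\lambda(t_{2i-1})2^{t_{2i}}] \cdot \lambda(t_{2\ell+1}) 2^{t_{2\ell+2}}$, pulling the first product outside both sums. What remains is to show that the double sum
\[
\sum_{t_{2\ell+1}=0}^{W(t_{[2\ell]})} \lambda(t_{2\ell+1}) \sum_{t_{2\ell+2}\ge 0} 2^{t_{2\ell+2}} \sqrt6^{\,W(t_{[2\ell+2]})}
\]
is at most $(C/(3/2)) \cdot \tfrac23 \cdot \sqrt6^{\,W(t_{[2\ell]})}$ — i.e. that absorbing one more segment costs at most a factor $C$, after accounting for the geometric $\sqrt6^{W}$ "budget" being spent. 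The key arithmetic identity to exploit is that decreasing the running budget by using a tree-segment of length $2t_{2\ell+1}$ shrinks $W(\cdot)$ by roughly $t_{2\ell+1}$, so $\sqrt6^{\,W(t_{[2\ell+1]})} \approx \sqrt6^{\,W(t_{[2\ell]}) - t_{2\ell+1}}$; since $\lambda(t_{2\ell+1}) \le \sqrt 6^{\,t_{2\ell+1}}$-type bounds... again, this is where the factor $\sqrt 6$ is chosen precisely so that $\lambda(t)$ and $\sqrt 6^{\,-t}$ balance, making $\sum_t \lambda(t)\sqrt6^{\,W(t_{[2\ell]})-t}$ a convergent (or at worst linearly growing) series, and similarly $\sum_s 2^s \sqrt6^{\,-cs/2 \cdot(\text{something})}$ converges because $c$ is large.

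The main obstacle is the interaction between the two growth regimes of $\lambda$ and the choice of the base $\sqrt 6$. Concretely, $\lambda(t) = 2^t$ for $t \le h/2$ and $\lambda(t) = 2^{h/2}3^{t - h/2}$ for $t > h/2$, so $\lambda(t)/\sqrt6^{\,t}$ equals $(2/\sqrt6)^t$ in the first regime (decreasing, since $2 < \sqrt 6$) and $(3/\sqrt 6)^{t-h/2}(2/\sqrt6)^{h/2}$ in the second (increasing, since $3 > \sqrt 6$), which means $\lambda(t) \le \sqrt6^{\,t}$ is \emph{not} globally true — it fails for $t$ close to $h$. The resolution is that $W(t_{[2\ell]})$ is itself bounded in terms of the remaining budget, which is at most $2h+2$, so the relevant sums $\sum_{t \le W}\lambda(t)$ telescope against $\sqrt6^{\,W}$ only because $\lambda(W)$ is the dominant term and $\lambda(W) \le C'\sqrt6^{\,W}$ holds \emph{on the range that actually occurs} — one must check that when $W$ is large enough to enter the degree-$3$ regime, the prefix has already "paid" enough in the $2^{t_{2i}}$ and $\lambda(t_{2i-1})$ factors, or else argue the bound $\sum_{t\le W}\lambda(t)\le \text{const}\cdot 6^{h/2}$ directly and note $6^{h/2} = n \le \sqrt6^{\,2h} $ combined with $W \le h$. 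I would handle this by splitting into the cases $W \le h/2$ and $W > h/2$ and using the explicit formula for $\lambda$; the constant $C$ is then chosen large enough to dominate all the finitely many geometric-series constants (which depend only on $c$ and the absolute ratios $2/\sqrt6$, $3/\sqrt 6$, $2$), and the factor $2$ out front and the $(3/2)^\ell$ are bookkeeping that survives from Lemma~\ref{lemma:num-paths-for-a-pattern}.
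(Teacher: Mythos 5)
Your overall strategy coincides with the paper's: the base case at $\ell = r$ is handled by summing Lemma~\ref{lemma:num-paths-for-a-pattern} over the last tree-segment, and the induction peels off one tree-segment and one expander-traversal per step, controlling each of the two resulting sums by a geometric-series estimate (ratio $2/\sqrt{6}^{\,c/2}<1$ for the expander sum once $c$ is large, and ratio $\sqrt{2/3}$ from each end of the bimodal tree sum after splitting it at $h/2$). All of that is correct and is exactly what the paper does; your upward/downward phrasing is only an index shift.

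The genuine gap is the step you yourself flag as ``the main obstacle'': you assert that $\lambda(t)\le\sqrt6^{\,t}$ fails for $t$ close to $h$, and you never close the resulting hole. Your fallback --- bounding $\sum_{t\le W}\lambda(t)$ by a constant times $6^{h/2}$ and invoking $W\le h$ --- does not work, since you would need $\sqrt6^{\,h}\le\mathrm{const}\cdot\sqrt6^{\,W}$, i.e.\ $W\ge h-O(1)$, which is not available. But the obstacle is illusory: $\lambda(t)\le\sqrt6^{\,t}$ holds for \emph{every} $t\in[0,h]$. In the first regime $\lambda(t)/\sqrt6^{\,t}=(2/\sqrt6)^{t}\le 1$; in the second regime $\lambda(t)/\sqrt6^{\,t}=(3/2)^{(t-h)/2}$, which is indeed increasing in $t$ but equals exactly $1$ at $t=h$, hence is at most $1$ on all of $(h/2,h]$. (Equivalently: $\lambda(0)=\sqrt6^{\,0}$ and $\lambda(h)=6^{h/2}=\sqrt6^{\,h}$, and $\log\lambda$ is piecewise linear with increasing slopes $\log 2<\log 3$, hence convex and below the chord $t\mapsto t\log\sqrt6$.) Since every argument of $\lambda$ that actually occurs is at most $W(\emptyset)=h+1-rc\le h$, this one inequality is all that is needed to finish both the base case ($\sum_{t\le W}\lambda(t)=O(\lambda(W))\le O(\sqrt6^{\,W})$) and the endpoint term $\lambda\left(W\left(t_{[2\ell-2]}\right)\right)$ arising from the upper end of the tree sum. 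With it in hand your argument closes and matches the paper's, with $C$ chosen to absorb the two geometric-series constants $\left(1-2/\sqrt6^{\,c/2}\right)^{-1}$ and $\left(1-\sqrt{2/3}\right)^{-1}$.
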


\begin{proof}
For $l = r$,   Lemma~\ref{lemma:num-paths-for-a-pattern} implies 
\begin{align*}
\card{\mP_{t_{[2r]}}}  &\leq  \sum_{t_{2r+1}=0}^{W\left(t_{[2r]}\right)}\left(\frac32\right)^r\prod_{i = 1}^{r}\left[\lambda(t_{2i-1})2^{t_{2i}}\right]\lambda(t_{2r+1})\\
&=\left(\frac32\right)^r\prod_{i = 1}^{r}\left[\lambda(t_{2i-1})2^{t_{2i}}\right]
\sum_{t_{2r+1}=0}^{W\left(t_{[2r]}\right)}\lambda(t_{2r+1})\\
&\leq  2\left(\frac32\right)^r\prod_{i = 1}^{r}\left[\lambda(t_{2i-1})2^{t_{2i}}\right]\times \sqrt{6}^{W\left(t_{[2r]}\right)}.
\end{align*}
The last inequality holds since $\sum_{t_{2r+1}=0}^{W\left(t_{[2r]}\right)}\lambda(t_{2r+1}) \leq 2\lambda\left(W\left(t_{[2r]}\right)\right) \leq 2\sqrt{6}^{W\left(t_{[2r]}\right)}$.

Now, suppose the lemma holds for some $1 \leq \ell \leq r$ and we shall prove that it holds for $\ell - 1$.  By the induction hypothesis, we have 
\begin{align*}
\card{\mP_{t_{[2\ell-2]}}} &\leq \sum_{t_{2\ell-1}=0}^{W\left(t_{[2\ell-2]}\right)}
\sum_{t_{2\ell}=0}^{W\left(t_{[2\ell-1]}\right)}2C^{r-\ell}\left(\frac32\right)^\ell\prod_{i=1}^\ell\left[\lambda(t_{2i-1})2^{t_{2i}}\right] \times \sqrt{6}^{W\left(t_{[2\ell]}\right)}\\
&=2C^{r-\ell}\left(\frac32\right)^\ell\prod_{i=1}^{\ell-1}\left[\lambda(t_{2i-1})2^{t_{2i}}\right]
 \sum_{t_{2\ell-1}=0}^{W\left(t_{[2\ell-2]}\right)} \lambda(t_{2\ell-1})
 \sum_{t_{2\ell}=0}^{W\left(t_{[2\ell-1]}\right)}2^{t_{2\ell}}\sqrt{6}^{W\left(t_{[2\ell]}\right)}.
\end{align*}
It is sufficient to prove that $\sum_{t_{2\ell-1}=0}^{W\left(t_{[2\ell-2]}\right)} \lambda(t_{2\ell-1})
 \sum_{t_{2\ell}=0}^{W\left(t_{[2\ell-1]}\right)}2^{t_{2\ell}}\sqrt{6}^{W\left(t_{[2\ell]}\right)} \leq \frac{2C}{3}\sqrt{6}^{W\left(t_{[2\ell-2]}\right)}$.
\begin{align}
\text{LHS}&\leq \frac{1}{1-2/\sqrt{6}^{c/2}} \sum_{t_{2\ell-1}=0}^{W\left(t_{[2\ell-2]}\right)}\lambda(t_{2\ell-1}) \sqrt{6}^{W\left(t_{[2\ell-1]}\Join(0)\right)} \label{inequality:induction-first}\\
& \leq \frac{1}{1-2/\sqrt{6}^{c/2}}\left(\frac{\sqrt{6}^{W\left(t_{[2\ell-2]}\Join(0,0)\right)}}{1-\sqrt{2/3}} + \frac{\lambda\left(W\left(t_{[2\ell-2]}\right)\right)}{1-\sqrt{2/3}}\right) \label{inequality:induction-second}\\
& \leq \frac{1}{1-2/\sqrt{6}^{c/2}}\frac{2}{1-\sqrt{2/3}}\sqrt{6}^{W\left(t_{[2\ell-2]}\right)}, \label{inequality:induction-third}
\end{align}
where $t_{[2\ell-1]}\Join (1)$ denotes the sequence obtained by concatenating the two sequences $t_{[2\ell-1]}$ and $(1)$.

We explain Inequalities~\eqref{inequality:induction-first},\eqref{inequality:induction-second} and \eqref{inequality:induction-third} one by one.  Focus on the term $Q=2^{t_{2\ell}}\sqrt{6}^{W\left(t_{[2\ell]}\right)}$. If we increase $t_{2\ell}$ by 1, then $W\left(t_{[2\ell]}\right)$ will decrease by exactly $c/2$, by the definition of $W$. (We assumed $c$ is an even number.) Thus, $Q$ will decrease by a factor of $\sqrt{6}^{c/2}/2$.  By the rule of the geometric sum, 
\[ \sum_{t_{2\ell}=0}^{W\left(t_{[2\ell-1]}\right)}Q \leq \frac{ Q|_{t_{2\ell}=0}}{1-2/\sqrt{6}^{c/2}} = \frac{\sqrt{6}^{W\left(t_{[2\ell-1]}\Join (0)\right)}}{1-2/\sqrt{6}^{c/2}},
\]
implying Inequality~\eqref{inequality:induction-first}. 

Now focus on the term $Q=\lambda(t_{2\ell-1})\sqrt{6}^{W\left(t_{[2\ell-1]}\Join (0)\right)}$. If we increase $t_{2\ell - 1}$ by 1, then $W\left(t_{[2\ell-1]}\Join (0)\right)$ will decrease by 1. Then, $Q$ will either decrease by a factor of  $\sqrt{6}/2 = \sqrt{3/2}$, or increase by a factor of $3/\sqrt{6}=\sqrt{3/2}$, depending on whether $t_{2\ell-1}\leq h/2$. We can split the sum $\displaystyle \sum_{t_{2\ell-1}=0}^{W(t_{[2\ell-2]})}Q$ into 2 sums at the point $h/2$ if necessary.  Again, using the geometric sum, we have
\[
\sum_{t_{2\ell-1}=0}^{W(t_{[2\ell-2]})}Q \leq \frac{Q|_{t_{2\ell-1} = 0}}{1-\sqrt{2/3}} + \frac{Q|_{t_{2\ell-1}=W(t_{[2\ell-2]})}}{1-\sqrt{2/3}},
\]
implying Inequality~\eqref{inequality:induction-second}.

Inequality~\eqref{inequality:induction-third} follows from the fact that $W\left(t_{[2\ell-2]}\Join(0, 0) \right)=W\left(t_{[2\ell-2]} \right)$ and $\lambda(t) \leq \sqrt{6}^t$ for any $t \in [0, h]$.

This finishes the proof if we let $\displaystyle C=\frac{3}{\left(1-2/\sqrt{6}^{c/2}\right)\left(1-\sqrt{2/3}\right)}$.
\end{proof}

Notice that $W(\emptyset) = \floor{\frac{2h+2 - 2rc }{2}} = h + 1 - rc$ for an even integer $c$. Thus, $$\card{\mP_{\emptyset}} \leq 2C^r\sqrt{6}^{h+1-rc}=2\sqrt{6}\left(\frac{C}{\sqrt{6}^c}\right)^r\sqrt{6}^h =2\sqrt{6}\left(\frac{C}{\sqrt{6}^c}\right)^rn .$$

$\mP_\emptyset$ is  essentially the set of paths in $\mP$ with rank $r$. For a large enough $c$, we have $C < \sqrt{6}^c$. Summing up over all $r \geq 1$, we have that the total number of paths in $\mP$ is at most 
\[
\frac{2\sqrt{6}C/\sqrt{6}^c}{1-C/\sqrt{6}^c}n.
\]

For a large enough constant $c$ (say, $c = 6$), the number will be at most $n/2$. 

Then, consider the graph $G$. We know there is a path of length $2h+2$ from $u$ to $v$ via $v^*$ for any vertex $v \in L(T_\sfR)$. Thus, for a fixed vertex $u \in L(T_\sfL)$,  there are at least $n/2$ vertices $v \in L(T_\sfR)$ such that the shortest path between $u$ and $v$ contains $v^*$. Therefore,  there are $n^2/2$ pairs $(u,v)$  such that the shortest path between $u$ and $v$ contain $v^*$.

{\it Acknowledgement:} This work was supported by NIST Grant No. 60NANB10D128.

\end{document}